\theoremstyle{plain} 
\newtheorem{thm}{Theorem}[section]
\newtheorem{dfn}[thm]{Definition}
\newtheorem{eg}[thm]{Example}
\newtheorem{rmk}[thm]{Remark}
\newtheorem{prop}[thm]{Proposition}
\newtheorem{cor}[thm]{Corollary}
\newtheorem{lem}[thm]{Lemma}
\numberwithin{equation}{section}
\newcommand{\Ann}{\mbox{Ann}\,}
\newcommand{\Hom}{\mbox{Hom}\,}
\newcommand{\Ext}{\mbox{Ext}\,}
\newcommand{\Spec}{\mbox{Spec}\,}
\newcommand{\Ass}{\mbox{Ass}\,}
\newcommand{\Soc}{\mbox{Soc}\,}
\newcommand{\gr}{\mbox{grade}\,}
\newcommand{\depth}{\mbox{depth}\,}
\renewcommand{\dim}{\mbox{dim}\,}
\newcommand{\id}{\mbox{inj.dim}\,}
\newcommand{\vd}{\mbox{vdim}\,}
\newcommand{\pd}{\mbox{proj.dim}\,}
\newcommand{\gd}{\mbox{G-dim}\,}
\newcommand{\h}{\mbox{ht}\,}
\newcommand{\im}{\mbox{Im}\,}
\newcommand{\ag}{\mathcal{G}}
\newcommand{\G}{\mbox{G}}
\newcommand{\fm}{\mathfrak{m}}
\newcommand{\fp}{\mathfrak{p}}
\newcommand{\fq}{\mathfrak{q}}
\begin{document}
	\title{Some characterizations of special rings by  delta-invariant}
	
	\author[M. T. Dibaei]{Mohammad T. Dibaei$^1$} 
	\author[Y. Khalatpour]{Yaser Khalatpour$^2$}
	
	\address{$^{1, 2}$ Faculty of Mathematical Sciences and Computer,
		Kharazmi University, Tehran, Iran.}
	
	\address{$^{1}$ School of Mathematics, Institute for Research in Fundamental Sciences (IPM), P.O. Box: 19395-5746, Tehran, Iran }
	\email{dibaeimt@ipm.ir} \email{yaserkhalatpour@gmail.com}

	\keywords{linkage of ideals, $\delta$-invariant, generically Gorenstein and Gorenstein rings, regular rings.}
	\subjclass[2010]{13C14, 13D02, 13H10, 16G50, 16E65, 13H05}

	\maketitle
	\begin{abstract}   This paper is devoted to present some characterizations for a local ring to be generically Gorenstein and Gorenstein  by means of $\delta$-invariant and linkage theory.\\
	\end{abstract}

	\section{Introduction}
 Characterizations of important classes of commutative Noetherian rings are in the main interests of the authors. There are a vast number of papers characterizing rings to be regular,  Gorenstein, generically Gorenstein, Cohen-Macaulay, etc (see \cite[Theorem 4.3]{CB}, \cite[Theorem  2.3]{JX},  \cite[Theorem  1.3]{CC},   \cite[Theorem 2.3]{LL}, \cite[Theorem 1]{SD}). For a finite module $M$ over a local ring $R$, the delta invariant of $M$, $\delta_R(M)$, has been defined by M. Auslander \cite{AB}. In this paper we use the delta invariant to study special rings such as generically Gorenstein, Gorenstein and regular.
 
   A commutative Noetherian ring $R$ is called \emph{generically Gorenstein} whenever $R_\fp$ is Gorenstein for all minimal prime ideal $\fp$ of $R$. It is well known that if $(R, \fm)$ is Cohen-Macaulay local ring with canonical module then $R$ is generically Gorenstein if and only if the canonical module is isomorphic to an ideal of $R$ (see \cite[Proposion 3.3.18]{BH}).

		In section 2, we investigate the delta invariant in order to characterize rings to be generically Gorenstein, Gorenstein, or regular. 
Our first result characterizes Cohen-Macaulay local rings with canonical modules which are non-Gorenstein generically Gorenstein (Theorem \ref{g}).	We prove that a complete local ring $(R, \fm, k)$ is regular if and only if $R$ is Gorenstein and a syzygy module of $k$ has a principal direct summand $R$-module whose delta invariant is equal $1$ and satisfies an extra condition (see Theorem \ref{2}).

		Section 3 is devoted to characterize Gorenstein  rings to be non-regular by means of higher delta invariant. We end the section with the following result.
		 
		 {\bf Corollary 
		 	 \ref{cor}}. {\it Suppose that $(R, \fm)$ is a  Gorenstein local ring  of dimension $d$ such that $R/\fm $ is infinite. Consider the following statements.
		\begin{itemize}
			\item [(a)]  $R$ is not regular.
			\item [(b)] There exists an  $ \fm $-primary ideal $I$ of $R$ such that
			\begin{itemize}
				
				\item[(i)]  $ I^{i}/I^{i+1} $ is free $R/I $-module for any $ i\geq0 $, and
				\item[(ii)]  for all $R$-regular sequence ${\bf x}=x_1,\cdots, x_s$ in $ I $ with \\ $ x_i+(x_1,\cdots, x_{i-1}) \in ( I/(x_1,\cdots, x_{i-1}))\setminus (I/(x_1,\cdots, x_{i-1}))^{2}$, $1\leq i\leq s$,   $\delta^{n}_{R/{\bf x}R}\left( R/I\right)=0$ for all $ n\geq0 $. 
			\end{itemize}
			\item [(c)] There exists a non-zero ideal $I$ of $R$ such that
			\begin{itemize}                       
				\item[(i)] $ \emph\depth( G )\geq \emph\depth_R(R/I) $, and
				\item[(ii)] $\delta_R^{n}(R/I^{m})=0 $ for all  integers $  n\geq d-\emph\depth( G )+1 $ and $ m\geq1 $. 
			\end{itemize}
		\end{itemize}
		Then  the implications (a)$\Rightarrow$(b)  and  (b)$\Rightarrow$(c)  hold true. If $ \emph\depth(G)>\emph\depth_R(R/I) $, the  implication  (c)$\Rightarrow$(a) holds true. }
		
			In section 4, we are interested in characterization of a ring to be Gorenstein by means of linkage theory and show how generically Gorenstein-ness of a ring with positive dimension may be reducible to the one with smaller dimension. As a result a class of non-generically Gorenstein rings may be recognized. 
						 
						Here is our main result in section 4. If $I$ and $J$ are ideals of a Cohen-Macaulay local ring with canonical module $\omega_R$ such that  $0:_{\omega_R} I=J\omega_R $ and $0:_{\omega_R} J=I\omega_R $ (these conditions coincide with those of  Peskine-Szpiro's conditions when the base ring is Gorenstein, see \cite{PS}), then Cohen-Macaulay-ness of $R/I$ and of $R/J$ are equivalent provided the $\G$-dimension of some particular modules are finite (see Theorem \ref{H}). We use this result to show when $\delta_R^i(I\omega_R)$ vanishes for all $i\geq 0$, where $\delta_R^i$ denote the higher delta invariants.
	
		Throughout $( R,\fm)$ is a commutative local Noetherian ring with maximal ideal $\fm $ and residue field $k=R/\fm $, and all modules are finite (i.e. finitely generated). 
	\section{Characterization  by delta invariant}
	The delta invariant of $M $ has been introduced by M. Auslander in  paragraph just after  \cite[Proposition 5.3]{AB}. For a finite (i.e. finitely generated) $R$-module $M$, denote $M ^\text{cm}$ the sum of all submodules $\phi(L)$ of $M $, where $L $ ranges over all maximal Cohen-Macaulay $R$-modules with no non-zero free direct summands and $\phi $ ranges over all $R$-linear homomorphisms from $L$ to $M $. The  $\delta$ invariant of $M$, denoted by $\delta_{R}\left( M\right)$,  is defined to be $\mu_R(M/M^\text{cm})$, the minimal number of generators of the  the quotient module $M/M^\text{cm} $. 
	
	A short exact sequence $0\longrightarrow Y \longrightarrow X \overset{\varphi}{\longrightarrow} M \longrightarrow 0 $ of $R$-modules is called a Cohen-Macaulay approximation of $ M $ if $X$ is maximal-Cohen-Macaulay $R$-module and $Y$ has finite injective dimension over $R$. A Cohen-Macaulay approximation  $0\longrightarrow Y \longrightarrow X \overset{\varphi}{\longrightarrow} M \longrightarrow 0 $ of $M$ is called minimal if each endomorphism $ \psi $ of $ X $, with $ \varphi\circ \psi=\varphi $, is an automorphism of $ X $. If $R$ be Cohen-Macaulay with canonical module $ \omega_R$ then  a minimal Cohen-Macaulay approximation of $ M $ exists and is unique up to isomorphism (see \cite[Theorem 11.16]{GL},  \cite[Corollary 2.4]{MA} and \cite[Proposition 1.7]{MR}). If the sequence $ 0\longrightarrow Y \longrightarrow X \overset{\varphi}{\longrightarrow} M \longrightarrow 0 $ is a minimal Cohen-Macaulay approximation of $ M $, then  $ \delta_R(M) $ determines  the maximal rank of a free direct summand of $ X $ (see \cite[Exercise 11.47]{GL} and \cite[Proposition 1.3]{MM}). Also it can be shown that if $R$ is Cohen-Macaulay ring which admits a canonical module, then $\delta_R(M) $ is less than or equal to $ n $, where  there is an epimorphism $X\oplus R^{n}\longrightarrow M$ with $X $ a maximal Cohen-Macaulay module with no free direct summands (see \cite[Proposition 11.25]{GL} and  \cite[Proposition 4.8]{AA}). This definition of delta is used by Ding \cite{BB}.  We recall the basic properties of the delta invariant. 
\begin{prop}\label{a} 
\cite[Corollary 11.26]{GL} and  \cite[Lemma 1.2]{AD}. Let $M $ and $N $ be finite modules over a Gorenstein local ring $(R, \fm, k)$. Then the following statements hold true.	
\begin{itemize} 
\item [(i)] $\delta_{R}(M\oplus N)=\delta_{R}\left( M\right)+\delta_{R}\left( N\right)  $.
\item[(ii)] If  there is a an $R$-epimorphism $M\longrightarrow N $, then $\delta_{R}\left( M\right)\geq \delta_{R}\left( N\right)$.
\item[(iii)] $\delta_{R}(M)\leq\ \mu(M) $.
\item[(iv)] $\delta_{R}\left( k\right)=1$  if and only if $R$ is regular.
\item[(v)] $\delta_{R}(M)=\mu(M) $ when  $\emph\pd_{R}(M) $ is finite.	
\end{itemize}
\end{prop}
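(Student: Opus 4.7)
The plan is to deduce each item from the two characterizations of $\delta_R$ already recalled in the introduction: (A) $\delta_R(M)$ equals the maximal rank of a free direct summand of $X$ appearing in the minimal Cohen--Macaulay approximation $0\to Y\to X\to M\to 0$; and (B) $\delta_R(M)\le n$ if and only if there exists an epimorphism $X\oplus R^n\to M$ with $X$ a maximal Cohen--Macaulay module having no non-zero free direct summand. Since $R$ is Gorenstein, $\omega_R=R$, so minimal Cohen--Macaulay approximations exist and are unique up to isomorphism, and for finite $R$-modules finite projective dimension coincides with finite injective dimension.

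For (i), the direct sum of the minimal Cohen--Macaulay approximations of $M$ and $N$ is a Cohen--Macaulay approximation of $M\oplus N$; minimality is preserved by a short check using the defining endomorphism condition, and then (A) gives additivity because the maximal free summand of $X_M\oplus X_N$ is the sum of those of $X_M$ and $X_N$. For (ii), if $\pi\colon M\to N$ is surjective and $X\oplus R^{\delta_R(M)}\to M$ realizes (B) for $M$, composing with $\pi$ realizes the same bound for $N$, so $\delta_R(N)\le\delta_R(M)$. Specializing (B) with $X=0$ and $n=\mu(M)$ applied to the canonical surjection $R^{\mu(M)}\twoheadrightarrow M$ yields (iii). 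For (v), write the minimal Cohen--Macaulay approximation $0\to Y\to X\to M\to 0$ with $X=X'\oplus R^r$, $X'$ maximal Cohen--Macaulay without a free summand. Since $R$ is Gorenstein, $Y$ has finite projective dimension, and combined with $\pd_R(M)<\infty$ the exact sequence forces $\pd_R(X)<\infty$. A maximal Cohen--Macaulay module of finite projective dimension over a Gorenstein local ring is free, hence $X'=0$; minimality of the approximation then identifies $r$ with $\mu(M)$.

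The main obstacle is the forward direction of (iv): the implication $R$ regular $\Rightarrow\delta_R(k)=1$ is immediate from (v) together with $\mu(k)=1$, but to recover regularity from $\delta_R(k)=1$ one must argue as follows. By (A) the minimal Cohen--Macaulay approximation takes the form $0\to Y\to X'\oplus R\to k\to 0$ with $X'$ having no free summand. Right minimality of the approximation rules out any decomposition in which the $R$-summand maps to zero, so the map $R\to k$ is surjective; a similar splitting obstruction forces the induced map $X'\to k$ to be zero, whence $X'\subseteq Y$. Passing to the quotient gives the short exact sequence $0\to Y/X'\to R\to k\to 0$, so $Y/X'\cong\fm$. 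A change-of-rings/Ext-comparison argument then transfers finite injective dimension from $Y$ to $\fm$, and Bass' theorem concludes that $R$ is regular. This last delicate step is the one where I expect most of the technical work to lie, and it is where I would lean on the cited results \cite[Corollary 11.26]{GL} and \cite[Lemma 1.2]{AD}.
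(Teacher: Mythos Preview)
The paper does not supply its own proof of this proposition: it is stated as a recalled result, with attribution to \cite[Corollary 11.26]{GL} and \cite[Lemma 1.2]{AD}, and no argument is given. So there is nothing in the paper to compare your proposal against line by line.

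That said, your sketch is essentially sound for (i)--(iii) and (v). The one genuine gap is in (iv). From the minimal approximation $0\to Y\to X'\oplus R\to k\to 0$ you correctly argue, using right-minimality, that the component $X'\to k$ is zero. You then pass to $0\to X'\to Y\to \fm\to 0$ and assert that ``a change-of-rings/Ext-comparison argument transfers finite injective dimension from $Y$ to $\fm$''. This step does not go through as stated: over a Gorenstein ring finite injective dimension equals finite projective dimension, and the short exact sequence yields $\pd_R(\fm)<\infty$ only if $\pd_R(X')<\infty$; but $X'$ is maximal Cohen--Macaulay with no free summand, so $\pd_R(X')<\infty$ forces $X'=0$. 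You have not established $X'=0$, and without it the transfer fails.

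The fix is immediate and uses exactly the minimality you already invoked. Once you know the component $X'\to k$ vanishes, consider the endomorphism $\psi$ of $X'\oplus R$ given by $\psi(x',r)=(0,r)$. Then $\varphi\circ\psi=\varphi$, and $\psi$ is an automorphism only if $X'=0$; minimality therefore forces $X'=0$. The approximation collapses to $0\to Y\to R\to k\to 0$, so $Y\cong\fm$ has finite injective (hence projective) dimension, and $R$ is regular by Auslander--Buchsbaum--Serre. With this correction your argument for (iv) is complete and self-contained, without needing to defer to the cited references.
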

	
 Here is our first observation which shows how one may characterize a Cohen-Macaulay local ring with canonical module to be generically Gorenstein by the $\delta$-invariant.

  \begin{thm}\label{g} Let $(R, \fm)$ be a    Cohen-Macaulay local ring of dimension $d>0$  with canonical module   $\omega_{R}$. Then the following statements are equivalent.
  	\begin{itemize}
  		\item [(a)] The ring $R$ is generically Gorenstein and non-Gorenstein.
  		\item [(b)] There exists an ideal $I$ of $R$ such that
  		\begin{itemize}
  			\item[(i)] $\delta_{R}\left( R/I\right)=1$,    
  			\item[(ii)] $\emph\h_{R}\left( I\right) = 1$, 
  			\item[(iii)]  There exists a commutative diagram 
  			
  			\begin{center}
  				$\begin{array}{cccccccccc}
  				& & R & \longrightarrow & R/I & \longrightarrow & 0\\ 
  				& &\downarrow\cong & & \downarrow\cong &  &  \\ 
  				&  &\emph\Hom_{R}(I,\omega_{R})  & \longrightarrow &\emph\Ext^{1}_{R}(R/I,\omega_{R})  &\longrightarrow & 0
  				\end{array}$ 
  			\end{center}
  			
  			with isomorphism vertical maps.
  		\end{itemize}
  	\end{itemize}
  \end{thm}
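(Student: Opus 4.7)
The plan is to exploit the classical equivalence for Cohen--Macaulay rings that being generically Gorenstein corresponds to $\omega_R$ being isomorphic to an ideal of $R$ (with quotient Gorenstein of dimension $d-1$ in the non-Gorenstein case), and to translate this structural fact into the $\delta$-invariant condition and the $\Hom$--$\Ext$ diagram of (b).

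For (a) $\Rightarrow$ (b), I would take $I\subseteq R$ isomorphic to $\omega_R$ via \cite[Proposition 3.3.18]{BH}; non-Gorensteinness forces $I$ to be proper, and the maximal Cohen--Macaulayness of $\omega_R$ gives $\h(I)=1$, so (ii) holds. The sequence $0 \to I \to R \to R/I \to 0$ is a Cohen--Macaulay approximation of $R/I$ since $I\cong\omega_R$ has finite injective dimension, and minimality follows because any $\psi\in\Hom_R(R,R)$ with $\varphi\psi=\varphi$ is multiplication by an element of $1+\fm$, hence a unit. The middle term being the rank-one free module $R$ then yields $\delta_R(R/I)=1$, giving (i). For (iii), apply $\Hom_R(-,\omega_R)$; since $I$ contains an $R$-regular element and $\Ass_R\omega_R=\Ass_R R$, one has $\Hom_R(R/I,\omega_R)=0$, and combined with $\Hom_R(I,\omega_R)\cong\Hom_R(\omega_R,\omega_R)\cong R$ via the iso $I\cong\omega_R$, the long exact sequence collapses to $0\to\omega_R\to R\to R/I\to 0$ and assembles into the required commutative square.

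For (b) $\Rightarrow$ (a), the strategy is to reverse this process. Apply $\Hom_R(-,\omega_R)$ to $0\to I\to R\to R/I\to 0$; condition (ii) together with the CM hypothesis implies $\Hom_R(R/I,\omega_R)=0$, so the long exact sequence reduces to $0\to\omega_R\to\Hom_R(I,\omega_R)\to\Ext^1_R(R/I,\omega_R)\to 0$. Transporting along the vertical isos of the diagram in (iii), the image of $\omega_R$ inside $R$ coincides with $\ker(R\to R/I)=I$, so $\omega_R$ maps isomorphically onto $I$, proving that $R$ is generically Gorenstein. For non-Gorensteinness, the argument must leverage the full commutative square---in particular how the left iso $R\cong\Hom_R(I,\omega_R)$ interacts with the canonical map $\omega_R\to\Hom_R(I,\omega_R)$ coming from the sequence---to preclude $I$ from being principally generated by a nonzerodivisor, which is the shape it would have if $R$ were Gorenstein.

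The main obstacle is precisely this non-Gorenstein conclusion. A priori, all three conditions (i)--(iii) appear compatible with $R$ Gorenstein and $I=(x)$ for an $R$-regular $x$, since then $\delta_R(R/(x))=\mu(R/(x))=1$ by Proposition \ref{a}(v) and the diagram can be assembled formally from the Koszul resolution. The delicate point is to show that the specific compatibility demanded by (iii)---between the isomorphism $R\cong\Hom_R(I,\omega_R)$ and the structural map $\omega_R\to\Hom_R(I,\omega_R)$ produced by the connecting homomorphism---cannot be met when $\omega_R\cong R$. This is where the proof should concentrate its attention, as the generically Gorenstein part of (a) already follows from the diagram and the height hypothesis alone.
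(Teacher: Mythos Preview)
For (a) $\Rightarrow$ (b) you and the paper agree on taking $I=\omega_R$, but you compute $\delta_R(R/I)=1$ differently. The paper argues directly from the definition via $M^{\mathrm{cm}}$: for any maximal Cohen--Macaulay $L$ with no free summand and any $\phi\colon L\to R/\omega_R$, the vanishing of $\Ext^1_R(L,\omega_R)$ lets one lift $\phi$ through $R$, and this forces $\phi(L)\subseteq\fm(R/\omega_R)$; hence $(R/\omega_R)^{\mathrm{cm}}\subseteq\fm(R/\omega_R)$ and $\delta=\mu=1$. Your route, recognising $0\to\omega_R\to R\to R/\omega_R\to 0$ as a minimal Cohen--Macaulay approximation whose middle term is free of rank one, is a valid alternative. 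For the generically Gorenstein half of (b) $\Rightarrow$ (a), your extension of the square in (iii) to a morphism of short exact sequences and the resulting identification $I\cong\omega_R$ is exactly what the paper does.

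Where you hesitate, the paper's own argument is not convincing either. For the non-Gorenstein conclusion the paper writes: assume $R$ Gorenstein; then $\omega_R\cong R$ and $\Hom_R(R,\omega_R)\cong\Hom_R(I,\omega_R)$; ``the commutative diagram (iii) implies that $R/I=0$,'' contradicting $\delta_R(R/I)=1$. But your counter-scenario survives this reasoning: if $R$ is Gorenstein and $I=(x)$ for a regular non-unit $x$, then (i) holds by Proposition~\ref{a}(v), (ii) is immediate, and under the natural identifications $\Hom_R((x),R)\cong R$ and $\Ext^1_R(R/(x),R)\cong R/(x)$ the bottom arrow of (iii) becomes the canonical projection $R\to R/(x)$, so the identity vertical maps furnish the required commuting square. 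The paper's step seems to conflate the abstract isomorphism $\Hom_R(R,\omega_R)\cong R\cong\Hom_R(I,\omega_R)$ with the restriction map being an isomorphism---but that map is multiplication by $x$ and has nonzero cokernel. So your diagnosis of the obstacle is on target, and the paper does not supply an argument that actually overcomes it; the gap you left open appears to be a gap in the paper's proof as well.
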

  \begin{proof}
  	(a)$\Rightarrow$(b). Assume that $R$ is generically Gorenstein and that $\omega_{R}\ncong R$.   As $\omega_{R}$ is an ideal of $R$, we consider the exact sequence
  	\begin{center}
  		$0\longrightarrow  \omega_{R} \longrightarrow R\overset{\pi}{\longrightarrow} M \longrightarrow 0,  $
  	\end{center} where $M:=R/\omega_R$.
  	Let $L$ be a maximal Cohen-Macaulay $R$-module with no free direct summands,  $\phi:L\longrightarrow M$ an $R$-homomorphism. Applying the functor $\Hom_R(L,-)$
  	gives the long exact sequence
  	\begin{center}
  		$0\longrightarrow \Hom_{R} (L, \omega_{R} ) \longrightarrow \Hom_{R} (L,R) \longrightarrow \Hom_{R} (L,M )\longrightarrow \Ext^{1}_{R}(L,\omega_{R}).$
  	\end{center}
  	As $\Ext^{1}_{R}(L,\omega_{R})=0$, there exists $\alpha\in \Hom_{R} (L,R)$ such that $\pi \circ \alpha =\phi $.  If there exists $x\in L$ such that $\phi(x)\notin \fm M$ then  we have $\alpha(x)\notin \fm$, i.e. $\alpha(x)$ is a unit and so $\alpha$ is an epimorphism which means $L$ has a free direct summand which is not the case.  Hence $\phi(L)\subseteq \fm M $. Therefore $M^\text{cm} \subseteq \fm M$ and we have
  	$$\delta_{R}(M)=\mu(M/M^\text{cm})=	\vd_{k}(M/(M^\text{cm}+\fm M))=\mu(M/\fm M)=\mu(M)=1. $$
  	Moreover, we have  $\Ext_{R}^{1}\left( R/\omega_{R},\omega_{R}\right) \cong R/\omega_{R}$  since $R/\omega_{R}$  is Gorenstein ring of dimension $d-1$, and $\Hom_{R} (\omega_{R} ,\omega_{R}
  	)\cong R$, $\h_{R}\left( \omega_{R}\right) =1$. Now that the statement (iii) follows naturally.
  	
  	(b)$\Rightarrow$(a). As $\h(I)= 1$, $I\nsubseteq\underset{\fp\in\Ass(R)}{\cup}\fp$ and so $\Hom_{R}(R/I,\omega_{R})=0$. Hence, naturally, we obtain the exact sequence
  	
  	\begin{center}
  		$0\longrightarrow  \Hom_{R}(R,\omega_{R})\longrightarrow \Hom_{R}(I,\omega_{R})\longrightarrow \Ext^{1}_{R}(R/I,\omega_{R})\longrightarrow 0.$
  	\end{center}
  	One has the following commutative diagram
  	
  	\begin{center}
  		$\begin{array}{cccccccccc}
  		0 & \longrightarrow  & I & \longrightarrow & R & \longrightarrow & R/I &\longrightarrow &0& \\ 
  		&  & &  & \downarrow\cong &  & \downarrow \cong&  &  \\ 
  		0 & \longrightarrow & \Hom_{R}(R,\omega_{R}) & \longrightarrow &\Hom_{R}(I,\omega_{R})  & \longrightarrow &\Ext^{1}_{R}(R/I,\omega_{R})  &\longrightarrow  & 0.
  		\end{array}$
  	\end{center}
  	Therefore we obtain,  $I\cong \omega_{R} $ which means $R$ is generically Gorenstein.  
  	
  	To see the final claim, assume contrarily that $R$ is Gorenstein. Hence  $ \omega_R\cong R$ and $  \Hom_{R}(R,\omega_{R})\cong \Hom_{R}(I,\omega_{R})  $.  Now, the commutative diagram (iii) implies that $R/I=0 $ so $ \delta_R(R/I)=0 $ which is a contradiction. 
  \end{proof}

Our next observation traces ideals $J$ with $\delta_R(R/J)=1$.

 \begin{prop} 
	Let $(R, \fm)$ be a Cohen-Macaulay local ring with canonical module   $\omega_R$, and let $I$ and $J$ be two ideals of $R$ such that $R/J\cong\Omega_R(R/I)$, $J\cong \emph\Ann_{\omega_R}(I) $ and $ \emph\h_R(J)\geq 2$. Then $R$ is generically Gorenstein and $ \delta_R(R/J)=1 $.
\end{prop}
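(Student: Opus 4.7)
The plan is to dualize against $\omega_R$. Specifically, from $R/J\cong\Omega_R(R/I)$ I will first extract a short exact sequence
\[
0\lra R/J\lra R^{n}\lra R/I\lra 0
\]
for some $n\geq 1$. Applying $\Hom_R(-,\omega_R)$ and using that $\h_R(J)\geq 2$ together with the maximal Cohen--Macaulayness of $\omega_R$ to kill $\Hom_R(R/J,\omega_R)$ and $\Ext^1_R(R/J,\omega_R)$, the dualised sequence collapses to an isomorphism $\Hom_R(R/I,\omega_R)\cong \omega_R^{n}$. The hypothesis $J\cong \Ann_{\omega_R}(I)\cong \Hom_R(R/I,\omega_R)$ then delivers
\[
J\cong \omega_R^{n}
\]
as $R$-modules.

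The next step is to force $n=1$. For every minimal prime $P$ of $R$ the condition $\h_R(J)\geq 2$ gives $J\nsubseteq P$, so $J_P=R_P$. Combined with $J\cong \omega_R^{n}$ this yields $R_P\cong (\omega_R)_P^{n}$ as $R_P$-modules, and comparing lengths over the Artinian local ring $R_P$ (whose canonical module has the same length as $R_P$) produces $\ell(R_P)=n\cdot\ell(R_P)$, forcing $n=1$. Hence $J\cong \omega_R$ as $R$-modules, realising $\omega_R$ as an ideal of $R$, so $R$ is generically Gorenstein by \cite[Proposition~3.3.18]{BH}.

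For the delta-invariant, I would re-run the argument from the proof of Theorem~\ref{g}(a)$\Rightarrow$(b)(i) with $M:=R/J$ and the sequence $0\to J\to R\to M\to 0$, which under $J\cong \omega_R$ plays the role of $0\to \omega_R\to R\to R/\omega_R\to 0$. Given any maximal Cohen--Macaulay module $L$ with no free direct summand and any $\phi\colon L\to M$, the vanishing $\Ext^1_R(L,\omega_R)=0$ lifts $\phi$ to some $\alpha\colon L\to R$; if there were $x\in L$ with $\phi(x)\notin \fm M$, then $\alpha(x)$ would be a unit and $L$ would acquire a free direct summand, a contradiction. Hence $M^{\text{cm}}\subseteq \fm M$ and
\[
\delta_R(R/J)=\mu(M/M^{\text{cm}})=\mu(R/J)=1.
\]

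I expect the main obstacle to be the length comparison at minimal primes that forces $n=1$ (one must carefully argue that $J_P=R_P$ and that $\ell((\omega_R)_P)=\ell(R_P)$ in the Artinian quotient); with that step in place both conclusions are essentially formal consequences of $\omega_R$-duality and a verbatim re-run of the Theorem~\ref{g} argument.
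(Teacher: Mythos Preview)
Your proof is correct and follows essentially the same route as the paper: dualize the syzygy sequence against $\omega_R$, use $\h_R(J)\geq 2$ to kill $\Hom_R(R/J,\omega_R)$ and $\Ext^1_R(R/J,\omega_R)$, identify $J$ with a direct sum of copies of $\omega_R$ (whence generic Gorensteinness), and then rerun the lifting argument from Theorem~\ref{g} on $0\to J\to R\to R/J\to 0$ to obtain $\delta_R(R/J)=1$. Your length-comparison step forcing $n=1$ is harmless but unnecessary: since $R/I$ is cyclic and $\Omega_R$ is taken in the minimal resolution one has $n=\mu(R/I)=1$ outright, and in any case the paper's argument goes through verbatim for $J\cong\omega_R^{\,t}$ with $t$ left arbitrary (an embedding $\omega_R^{\,t}\hookrightarrow R$ already realizes $\omega_R$ as an ideal, and $\Ext^1_R(L,\omega_R^{\,t})=0$ for any maximal Cohen--Macaulay $L$).
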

\begin{proof}
	Apply the functor $\Hom_R(-, \omega_R) $ on the exact sequence $ 0\longrightarrow R/I\longrightarrow \overset{t}{\oplus}R\longrightarrow R/J\longrightarrow 0 $ to get the exact sequence
	\begin{center}
		$ 0\longrightarrow \Hom_R(R/J, \omega_R)\longrightarrow \Hom_R(\overset{t}{\oplus}R, \omega_R)\longrightarrow \Hom_R(R/I, \omega_R)\longrightarrow \Ext^{1}_R(R/J, \omega_R)\longrightarrow0 $.
	\end{center}
	But we have $ \Hom_R(R/J, \omega_R)=0=\Ext^{1}_R(R/J, \omega_R)=0 $, since $ \h_R(J)\geq 2  $.  Therefore we have $\overset{t}{\oplus}\omega_R\cong J$. Then $R$ is generically Gorenstein.
	
	Finally, assume that $L$ is a maximal Cohen-Macaulay $R$-module with no free direct summands and that $f: L\rightarrow R/J$ is an $R$-homomorphism. Applying $\Hom(L, -)$ on the exact sequence $$ 0\longrightarrow \overset{t}{\oplus}\omega_R\cong J\longrightarrow R \longrightarrow R/J \longrightarrow 0 $$ implies that $f(L)\subseteq\fm(R/J)$, so that $\delta_R(R/J)=1$.
\end{proof}
  Over a Gorenstein local ring $R$,  Proposition \ref{a} (iii) states that the inequality   $\delta_R(M)\leq\mu(M)$. In the following, we explore when equality holds true by means of Gorenstein dimensions. 
     
  A finite $R$-module $ M $ is said to be {\it totally reflexive} if the natural map $ M\longrightarrow \Hom_{R}(\Hom_{R}(M, R), R) $ is an isomorphism and
  $ \Ext^{i}_{R}(M, R)=0=\Ext^{i}_{R}(\Hom_{R}(M, R), R)$ for all $i>0$.
  An $R$-module $ M $  is said to have  Gorenstein dimension $\leq n$, write $\gd_R(M)\leq n $, if there exists an exact sequence  $$ 0\longrightarrow G_{n}\longrightarrow\cdots \longrightarrow G_{1}\longrightarrow G_{0}\longrightarrow M\longrightarrow 0, $$ of $R$ modules such that each $ G_{i} $ is totally reflexive. Write $ \gd_{R}(M)= n $ if there is no such sequence with shorter length. If there is no such finite length exact sequence, we write $\gd_R(M)=\infty$.

  Our first result indicates the existence of a finite length $R$- module $M$ such that the equality $\delta_R(M)=\mu(M)$ holds true may put a strong condition on $R$. More precisely: 
    
     In \cite[Theorem 6.5]{CB}, it is shown that the local ring $(R, \fm, k)$ is Gorenstein if and only if  $\Omega^{n}_{R}(k) $  has a G-projective summand for some $n$, $ 0\leq n\leq \depth R+2 $ .
 \begin{thm} \label{l}
 Let $(R, \fm) $ be a local ring. The following statements are equivalent.  
\begin{itemize}
\item[(i)] $R$ is Gorenstein.       
 \item[(ii)] There exists an $R$--module $M$ such that $ \delta_R(M)=\mu(M)$, $ \fm^{n}M=0$, and $\emph{\gd}_{R}(\fm^{n-2}M^{\emph{cm}})<\infty$ for some integer $ n\geq 2 $.
 \end{itemize}
 \end{thm}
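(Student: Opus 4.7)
My plan is to prove the two implications separately.

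For (i)$\Rightarrow$(ii), I would set $M := R/(\mathbf{x})$ where $\mathbf{x} = x_1,\dots,x_d$ is a system of parameters of $R$. Since $R$ is Gorenstein, $\mathbf{x}$ is a regular sequence, so $\pd_R(M) = d < \infty$, and Proposition~\ref{a}(v) gives $\delta_R(M) = \mu(M) = 1$. Because $(\mathbf{x})$ is $\fm$-primary, $M$ has finite length and $\fm^n M = 0$ for some $n \ge 2$. Finally, every finite module over a Gorenstein ring has finite Gorenstein dimension, so $\gd_R(\fm^{n-2}M^{\text{cm}}) < \infty$ is automatic.

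For (ii)$\Rightarrow$(i), the strategy is to display $k$ as a direct summand of $\fm^{n-2}M^{\text{cm}}$ and then invoke the classical criterion that $\gd_R(k)<\infty$ forces $R$ to be Gorenstein. The starting point is the identity $\delta_R(M) = \mu(M/M^{\text{cm}})$: combined with $\mu(M) = \mu(M/\fm M)$ and Nakayama, the equality $\delta_R(M) = \mu(M)$ is equivalent to the containment $M^{\text{cm}} \subseteq \fm M$. Multiplying by $\fm^{n-2}$ gives $\fm^{n-2}M^{\text{cm}} \subseteq \fm^{n-1}M$, and since $\fm\cdot\fm^{n-1}M \subseteq \fm^n M = 0$, the submodule $\fm^{n-2}M^{\text{cm}}$ is annihilated by $\fm$. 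It is therefore a finite-dimensional $k$-vector space, say $k^r$. Provided $r \geq 1$, the module $k$ is a direct summand of $\fm^{n-2}M^{\text{cm}}$, and then
\[
\gd_R(k) \le \gd_R(\fm^{n-2}M^{\text{cm}}) < \infty,
\]
forcing $R$ to be Gorenstein by the classical result recalled just before the theorem (via \cite{CB}).

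The main obstacle is to secure $r \geq 1$: if $\fm^{n-2}M^{\text{cm}} = 0$ the Gorenstein-dimension hypothesis becomes vacuous and no information about $R$ is recovered. I expect this non-vanishing to be enforced either by taking $n$ to be the precise nilpotency index of $\fm$ on $M$ (so that $\fm^{n-1}M \ne 0$ and a trace of $M^{\text{cm}}$ persists at level $n-2$) or by exploiting the rigidity imposed by $\delta_R(M) = \mu(M)$ together with finite length of $M$. Once the non-vanishing is in hand, the remaining steps are standard: finite Gorenstein dimension descends to direct summands of $k$-vector spaces, and the already-recalled characterization of Gorenstein rings by finiteness of $\gd_R(k)$ then closes the loop.
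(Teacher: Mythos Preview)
Your argument tracks the paper's proof almost step for step. For (i)$\Rightarrow$(ii) the paper takes $M=R/\fm^{t}$ (with $t$ chosen so that $R/\fm^t$ surjects onto $R/(\underline{x})$ for a maximal regular sequence $\underline{x}$) rather than your $M=R/(\mathbf{x})$, but either cyclic finite-length module works and the finiteness of $\gd_R(\fm^{n-2}M^{\text{cm}})$ is automatic over a Gorenstein ring. For (ii)$\Rightarrow$(i) the paper does exactly what you outline: from $\delta_R(M)=\mu(M)$ it deduces $M^{\text{cm}}\subseteq\fm M$ via the short exact sequence
\[
0\lra (M^{\text{cm}}+\fm M)/\fm M\lra M/\fm M\lra \frac{M/M^{\text{cm}}}{\fm(M/M^{\text{cm}})}\lra 0,
\]
observes that $\fm^{n-2}M^{\text{cm}}$ is then a $k$-vector space, and passes directly from $\gd_R(\fm^{n-2}M^{\text{cm}})<\infty$ to $\gd_R(k)<\infty$, citing \cite[Theorem 1.4.9]{BA}.

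The obstacle you isolate---that $\fm^{n-2}M^{\text{cm}}$ might vanish, making the $\gd$-hypothesis vacuous---is genuine, and the paper's proof does not address it either. Your proposed fixes cannot close the gap: the integer $n$ is part of the given data in (ii), so you are not free to replace it by the Loewy length of $M$, and the ``rigidity'' remark is not a concrete argument. In fact the gap appears fatal to the statement as written. Over $R=k[X,Y]/(X,Y)^{2}$, an Artinian non-Gorenstein local ring, take $M=R$ and $n=3$: every finite module is maximal Cohen--Macaulay, one checks $R^{\text{cm}}=\fm$, so $\delta_R(R)=\mu(R/\fm)=1=\mu(R)$; moreover $\fm^{3}R=0$ and $\fm^{n-2}M^{\text{cm}}=\fm\cdot\fm=\fm^{2}=0$ has finite Gorenstein dimension trivially. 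Thus (ii) holds while (i) fails. So neither your sketch nor the paper's argument can be completed without an additional nondegeneracy clause such as $\fm^{n-2}M^{\text{cm}}\neq 0$.
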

 \begin{proof}
Assume first that $R$ is Gorenstein and that $\underline{x}$ is a maximal $R$--regular sequence. Thus there is a surjective homomorphism $R/\fm^{t}\longrightarrow R/\underline{x}R$ for some  integer $t\geq 1$. As $ \pd(R/\underline{x}R)<\infty $,  Proposition \ref{a} implies that
\begin{center}
 $1=\mu(R/\underline{x}R)=\delta_R(R/\underline{x}R)\leq\delta_R(R/\fm^{t})\leq\mu(R/\fm^{t})=1.$
 \end{center}
 Therefore $\delta_R(R/\fm^{t})=1=\mu(R/\fm^{t})$. Now by setting  $ n=t+1\geq2 $, the module $M:=R/\fm^{t}$ trivially justifies claim (ii).
       
 For the converse, consider the natural exact sequence  
 \begin{center}
 $0\longrightarrow (M^{\text{cm}}+\fm M)/\fm M\longrightarrow M/\fm M\longrightarrow \frac{M/M^{\text{cm}}}{\fm(M/M^{\text{cm}})}\longrightarrow 0 $.
  \end{center} 
 Now the equality $\delta_R(M)=\mu(M)$ implies that $M^{\text{cm}}\subseteq \fm M $. As $\fm^nM=0$, $ \fm^{n-2}M^{\text{cm}} $ is vector space. Our assumption $\gd_{R}(\fm^{n-2}M^\emph {cm})<\infty$ implies that $\gd_{R}(R/\fm)<\infty$.  Hence  $R$ is Gorenstein by \cite[Theorem 1.4.9]{BA}.
 \end{proof}
  

      Assume that $ (R, \fm, k) $ is a local ring with residue field $k$. In \cite[Corollary 1.3]{CC}, Dutta presents a characterization for $R$ to be regular in terms of the admitting a syzygy of $k$ with a free direct summand. Later on, 
  Takahashi, in \cite[Theorem 4.3]{CB}, generalized  the result in terms of the existence of a syzygy module of the residue field having a semidualizing module as its direct summand. Also Ghosh et.al, in \cite[Theorem 3.7]{BD}, have shown that the ring is regular if and only if  a syzygy module of $ k $  has a non-zero direct summand of finite injective dimension.  
 
 Now we investigate these notions by means of delta invariant. Denote by $\Omega^{i}_{R}(k) $ the  $i$th syzygy, in the minimal free resolution, of $k$. In the next result we prove that, for $i\gg 0$, $\Omega_R^i(k)$ does not possess a direct summand of finite injective dimension by means of $\delta$-invariant. D. Ghosh informed us that this result has been proved by him directly \cite[Theorem 3.3]{GO}.  

  \begin{prop} \label{f}
 Let $(R, \fm, k)$ be a local ring of dimension $d$. Then, for any $ i>d $, the following statements hold true.
 \begin{itemize}
 	\item [(a)]  $\Omega^{i}_{R}(k) $ has no non-zero direct summand of finite injective dimension.
   \item[(b)] If $R$ is Cohen-Macaulay  with canonical module $ \omega_R$, then $\Omega^{i}_{R}(k) $ has no direct summand isomorphic to $ \omega_R$.
  \item[(c)] $\Omega^{i}_{R}(k) $ has no free direct summand. 
 \end{itemize}
 \end{prop}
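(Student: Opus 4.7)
The plan is to argue all three parts simultaneously by a Nakayama-type contradiction. Fix $i > d$ and suppose, for contradiction, that $N$ is a non-zero direct summand of $\Omega_R^i(k)$, with retraction $\rho \colon \Omega_R^i(k) \twoheadrightarrow N$. From the minimal free resolution of $k$ one has the short exact sequence
\[
0 \longrightarrow \Omega_R^i(k) \longrightarrow F_{i-1} \longrightarrow \Omega_R^{i-1}(k) \longrightarrow 0,
\]
with $\Omega_R^i(k) \subseteq \fm F_{i-1}$ by minimality. Applying $\Hom_R(-, N)$ and using that $F_{i-1}$ is free, the obstruction to lifting $\rho$ to a map $\tilde{\rho} \colon F_{i-1} \to N$ lies in $\Ext_R^1(\Omega_R^{i-1}(k), N) \cong \Ext_R^i(k, N)$ by iterated dimension shift in the first variable.

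Once this Ext vanishes, the lift $\tilde{\rho}$ exists and splits the composite inclusion $N \hookrightarrow \Omega_R^i(k) \hookrightarrow F_{i-1}$, so $N$ becomes a direct summand of the free module $F_{i-1}$. Yet the same inclusion lands inside $\fm F_{i-1}$: reducing modulo $\fm$ makes the induced map $N/\fm N \to F_{i-1}/\fm F_{i-1}$ simultaneously zero and, via the splitting, split injective. Hence $N/\fm N = 0$, and Nakayama's lemma forces $N = 0$, contradicting $N \neq 0$. It then remains to verify $\Ext_R^i(k, N) = 0$ in each case: for (a), $\id_R N < \infty$ combined with Bass's formula gives $\id_R N = \depth R \leq d$; for (b), $\id_R \omega_R = d$; and for (c) in the Gorenstein setting, $\id_R R = d$. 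In all three situations $i > d$ suffices for the vanishing.

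The principal obstacle is case (c) when $R$ is not Gorenstein, since then $\Ext_R^i(k, R)$ need not vanish for $i > d$ and the extension step is unavailable. My remedy in the Cohen-Macaulay range is to dualize through $\Hom_R(-, \omega_R)$: a hypothetical free summand $R$ of $\Omega_R^i(k)$ produces a copy of $\omega_R \cong \Hom_R(R, \omega_R)$ inside $\Hom_R(\Omega_R^i(k), \omega_R)$, and one runs the analogous extension-Nakayama argument on the dualized complex, where $\Ext_R^j(-, \omega_R)$ vanishes on maximal Cohen-Macaulay modules for $j \geq 1$, ultimately reducing to (b). Outside the Cohen-Macaulay range, I would reduce depth by factoring out a regular element, with the base case relying on the observation that when $\depth R = 0$ every element of $\fm F_{i-1}$ is a zero-divisor, so no copy of $R$ can embed into $\fm F_{i-1}$ as required for a free summand.
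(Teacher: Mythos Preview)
Your argument for parts (a) and (b) is correct and is in fact more self-contained than the paper's. The paper proves (a) by first invoking the result of Ghosh--Gupta--Puthenpurakal \cite{BD} that a nonzero finite-injective-dimension summand in a syzygy of $k$ forces $R$ to be regular, and then derives a contradiction via additivity of the $\delta$-invariant; (b) and (c) are then declared consequences of (a). Your route---Bass's formula gives $\id_R N=\depth R\le d<i$, so $\Ext^i_R(k,N)=0$, the retraction lifts to $F_{i-1}$, and Nakayama kills $N$---avoids both the external regularity criterion and the $\delta$-machinery entirely.

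The gap is in (c) outside the Gorenstein case. Your two remedies are not complete as stated. The $\omega_R$-dualization can be made to work, but it presupposes a canonical module, which (c) does not; you would at minimum need to pass to the completion first, and you have not spelled out why the dualized cokernels are again maximal Cohen--Macaulay so that the relevant $\Ext^1(-,\omega_R)$ vanishes. The depth-reduction sketch is more problematic: after tensoring with $R/xR$ you rely on the decomposition $\overline{\Omega^i_R(k)}\cong\Omega^i_{\overline R}(k)\oplus\Omega^{i-1}_{\overline R}(k)$ (which needs $x\in\fm\setminus\fm^2$) and then must invoke Krull--Schmidt to isolate the $\overline R$-summand in one of the two pieces---this requires a henselian hypothesis or a preliminary passage to the completion, neither of which you mention. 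The paper sidesteps all of this: once (a) is in hand, (c) follows because a free summand in a syzygy of $k$ forces $R$ to be regular by Dutta \cite{CC}, whence $R$ is Gorenstein (so (a) applies), or more directly $\Omega^i_R(k)=0$ for $i>d$.
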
 
 \begin{proof}
(a).  Assume contrarily that, for an integer $i>d$,  $\Omega^{i}_{R}(k)=M\oplus N$ for some $R$--modules $ M\neq 0 $ and $ N $ with $ \id_R(M)<\infty$. By \cite[Theorem 3.7]{BD} we have $R$ is regular so that $ \pd_R(M)<\infty$. Thus, by Proposition \ref{a}(v), $ \delta_R(M)=\mu(M)$. As $i-1\geq d$,  $\Omega^{i-1}_{R}(k) $ is a maximal Cohen-Macaulay $R$-module. By the paragraph just after  \cite[Proposition 5.3]{AB}, the exact sequence $0\longrightarrow \Omega^{i}_{R}(k)\longrightarrow \oplus R\longrightarrow \Omega^{i-1}_{R}(k)\longrightarrow 0$ implies that $ \delta_R(\Omega^{i}_{R}(k))=0 $.   Therefore by Proposition \ref{a}(i) we obtain $ \mu(M)=0 $ which contradicts the fact that $ M\neq 0$.

Parts (b) and (c) are clear conclusions of (a).
\end{proof}
 
\begin{dfn}
 An $R$-module $ X $ is said to satisfy the condition $ (\ast) $ whenever, for any  $ X $-regular element  $ a $, $ X/aX $ is indecomposable as $R/aR$-module.
\end{dfn}

 \begin{thm} \label{2}
 Let $( R, \fm, k) $ be a complete local ring of dimension $ d $. The following statements are equivalent.  
 \begin{itemize}
 \item[(i)] $R$ is regular.     
 \item[(ii)] $R$ is Gorenstein and $\Omega^{n}_{R}(k) $ has a principal  $R$-module as its direct summand whose delta invariant is $1$ and satisfies the property $ (\ast) $,  for some $ n\geq 0$.
 \end{itemize}
 \end{thm}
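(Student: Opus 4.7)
The implication (i) $\Rightarrow$ (ii) is direct. If $R$ is regular of dimension $d \geq 1$, the minimal free resolution of $k$ is the Koszul complex on a regular system of parameters, whose top module is $R$ itself, so $\Omega^{d}_{R}(k) \cong R$. This $R$ is principal, has $\delta_{R}(R) = 1$ (its minimal Cohen--Macaulay approximation is trivial), and satisfies $(\ast)$ because $R/aR$ is itself a local ring for any $R$-regular $a$, hence indecomposable as a module over itself. When $d = 0$, $R = k$ is a field and we take $n = 0$.

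For the converse I would induct on $d = \dim R$. If $n = 0$, the cyclic summand $R/I$ of $k$ is forced to equal $k$ by simplicity of $k$, whence $\delta_{R}(k) = 1$ and Proposition~\ref{a}(iv) delivers regularity. The base case $d = 0$ is similar: over an Artinian Gorenstein ring every finite module is maximal Cohen--Macaulay, so $\delta_{R}$ equals the maximal rank of a free summand; then cyclicity together with $\delta_{R}(R/I) = 1$ force $I = 0$, and minimality of the resolution of $k$ restricts us to $n = 0$, giving $R = k$.

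For the inductive step with $d \geq 1$ and $n \geq 1$, I would pick $a \in \fm \setminus \fm^{2}$ that is $R$-regular (available by prime avoidance, since $\depth R \geq 1$ gives $\fm \notin \Ass R$) and set $\bar R := R/aR$, Gorenstein of dimension $d - 1$. Since $\Omega^{n}_{R}(k)$ is a submodule of a free module, $a$ is $\Omega^{n}_{R}(k)$-regular and hence regular on the summand $X = R/I$. A Schanuel-type change-of-rings argument then supplies, up to free summands,
\begin{equation*}
\Omega^{n}_{R}(k)/a\Omega^{n}_{R}(k) \;\cong\; \Omega^{n}_{\bar R}(k) \oplus \Omega^{n-1}_{\bar R}(k).
\end{equation*}
Property $(\ast)$ makes $X/aX = \bar R/\bar I$ indecomposable; moreover $\bar I \neq 0$, for otherwise $I \subseteq aR$ combined with the $X$-regularity of $a$ would yield $I \subseteq aI$, forcing $I = 0$ by Nakayama. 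Hence $X/aX \ncong \bar R$, and Krull--Schmidt over the complete local ring $\bar R$ places $X/aX$ as a cyclic direct summand of $\Omega^{m}_{\bar R}(k)$ for some $m \in \{n-1, n\}$; condition $(\ast)$ for $X/aX$ over $\bar R$ is automatic from cyclicity.

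The crux is then to show $\delta_{\bar R}(X/aX) = 1$, after which the inductive hypothesis yields $\bar R$ regular, and so $R$ itself is regular since $a \in \fm \setminus \fm^{2}$ is $R$-regular. Suppose for contradiction that $\delta_{\bar R}(X/aX) = 0$, so some maximal Cohen--Macaulay $\bar R$-module $\bar N$ without free summands surjects onto $X/aX$. The first $R$-syzygy $\Omega_{R}(\bar N)$ in a minimal $R$-free resolution is maximal Cohen--Macaulay over $R$ by the depth lemma, and has no free summand because $\bar N$ is not $\bar R$-free (so $\pd_{R}(\bar N) = \infty$ and the minimal $R$-resolution of $\bar N$ continues past degree one). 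Lifting the composite $R^{\mu} \twoheadrightarrow \bar N \twoheadrightarrow X/aX$ through $X \twoheadrightarrow X/aX$ via projectivity of $R^{\mu}$, and then chasing the commutative diagram
\begin{equation*}
\begin{array}{ccccccccc}
0 & \to & \Omega_{R}(\bar N) & \to & R^{\mu} & \to & \bar N & \to & 0 \\
  &     & \downarrow         &     & \downarrow &     & \downarrow &     &   \\
0 & \to & aX                 & \to & X       & \to & X/aX         & \to & 0
\end{array}
\end{equation*}
produces an $R$-linear surjection $\Omega_{R}(\bar N) \twoheadrightarrow aX \cong X$, contradicting $\delta_{R}(X) = 1$. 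I expect the main technical obstacle to be the clean deployment of Krull--Schmidt together with $(\ast)$ to isolate $X/aX$ as a summand of a single $\Omega^{m}_{\bar R}(k)$, and verifying that the lifting plus diagram chase yields an honest $R$-module surjection onto $X$ rather than merely onto $aX$ without further structure.
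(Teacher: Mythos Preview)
Your strategy coincides with the paper's: induct on $d$, pass to $\bar R=R/aR$ for a regular $a\in\fm\setminus\fm^2$, use $\overline{\Omega^n_R(k)}\cong\Omega^n_{\bar R}(k)\oplus\Omega^{n-1}_{\bar R}(k)$, and apply Krull--Schmidt over the complete ring $\bar R$ to place the indecomposable cyclic module $\bar X$ inside a single $\Omega^m_{\bar R}(k)$. Your remark that $(\ast)$ is automatic for nonzero cyclic modules over a local ring (their endomorphism ring is local) is correct and is exactly what makes the induction close; the paper leaves this implicit. For (i)$\Rightarrow$(ii) the paper simply takes $n=0$ with $X=k$, and for the base case $d=0$ it argues via $\Soc(R)\subseteq\Ann_R(\Omega^n_R(k))$ and the surjection $R/\Soc(R)\twoheadrightarrow X$ rather than your free-summand route, but these differences are cosmetic.

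The substantive divergence --- and the gap in your argument --- is at the step $\delta_{\bar R}(\bar X)=1$. The paper invokes Yoshida's inequality \cite[Corollary~2.5]{CA}: $\delta_R(X)\le\delta_{\bar R}(X/aX)$ whenever $a$ is $X$-regular. Combined with $\delta_{\bar R}(\bar X)\le\mu(\bar X)=1$ this gives the result in one line. Your direct argument instead asserts that $\Omega_R(\bar N)$ has no free $R$-summand ``because $\pd_R(\bar N)=\infty$ and the minimal $R$-resolution continues past degree one''. That inference is invalid: a minimal first syzygy can perfectly well have a free direct summand while projective dimension is infinite --- if $\Omega_R^1(\bar N)\cong R\oplus K$, the minimal resolution simply continues through $K$, and nothing in minimality forbids the splitting. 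Without that claim, your surjection $\Omega_R(\bar N)\twoheadrightarrow X$ only yields $\delta_R(X)\le s$, where $s$ is the free rank of $\Omega_R(\bar N)$, not the desired $\delta_R(X)=0$. The assertion that $\Omega_R^1(\bar N)$ has no free summand may in fact hold in this situation, but establishing it is tantamount to reproving Yoshida's inequality; citing that result, as the paper does, is the clean way through.
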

 \begin{proof}
 (i)$\Rightarrow$(ii). $k=\Omega^{0}_{R}(k) $ fulfills our statement by Proposition \ref{a}.
 
 (ii)$\Rightarrow$(i). Suppose that $R$ is Gorenstein and, for an integer $n\geq 0$, $\Omega^{n}_{R}(k)\cong X\oplus Y $ for some $R$-modules $ X $ and $ Y $ such that $X\cong R/\Ann_R(X)$  with $ \delta_R(X)=1 $. The case $ n=0 $ implies that $R$ is regular. So we may assume that $ n\geq 1$. 
 
 We proceed by induction on $d$.
 For the case $ d=0 $, if $ \fm\neq0 $ then $ \Soc(R)\neq0 $ and $R/\Soc(R) $ is maximal Cohen-Macaulay $R$-module with no free direct summand and so $ \delta_R(R/\Soc(R))=0 $. On the other hand, by  \cite[Lemma 2.1]{BD}, $ \Soc(R)\subseteq \Ann_R(\Omega^{n}_{R}(k) )=\Ann_R(X\oplus Y)\subseteq \Ann_R(X)$.  Therefore the natural surjection $R/\Soc(R)\longrightarrow R/\Ann_R(X)\cong X $ implies that $ 1=\delta_R(X)\leq \delta_R(R/\Soc(R))=0$ which is absurd. Hence $ \fm=0 $ and $R=R/\fm $ is regular. 
 
 Now we suppose that $ d\geq1 $ and the statement is settled for $ d-1 $. As $R$ is Cohen-Macaulay, we choose an $R$-regular element $ y\in \fm\setminus\fm^{2} $. Hence  $ y $ is $\Omega^{n}_{R}(k)  $-regular and $X$-regular.  We set $ \overline{(-)}=(-)\otimes_R R/yR$. Note that $ \overline{X} $ is a principal  $ \overline{R}$-module and that, by \cite[Corollary 2.5]{CA} and Proposition \ref{a}, $ 1=\delta_R(X)\leq \delta_{\overline{R}}(\overline{X}) \leq \mu(\overline{X})=1 $. Note that, by \cite[Corollary 3.5]{CB} ,  we have
  \begin{center}
 $ \overline{\Omega^{n}_{R}(k)}\cong \Omega^{n}_{\overline{R}}(k)\oplus \Omega^{n-1}_{\overline{R}}(k).  $
 \end{center}
 Therefore we have $ \overline{X}\oplus \overline{Y} \cong\overline{\Omega^{n}_{R}(k)}\cong \Omega^{n}_{\overline{R}}(k)\oplus \Omega^{n-1}_{\overline{R}}(k)  $. But $ \overline{X} $ is indecomposable $ \overline{R} $-module so,  by Krull-Schmit uniqueness theorem (see  \cite[Theorem 21.35]{BN}), $ \overline{X} $ is direct summand of $ \Omega^{n-1}_{\overline{R}}(k) $ or $ \Omega^{n}_{\overline{R}}(k) $. Now our induction hypothesis implies that $ \overline{R} $ is regular and so is $R$.
 \end{proof}  We end this section by the following remark which gives some more informations on the delta invariant.
  \begin{rmk} \label{n}
 Let $(R, \fm)$ be a local ring. 
 \begin{itemize}
 	\item[(a)] The ring $R$ is regular if and only if $R$ is  Gorenstein and  $\delta_R(M)>0$  for all non-zero finitely generated  $R$-module $ M $. 
 	\item[(b)]  If $R$ is Cohen-Macaulay with canonical module $ \omega_R$, then $R$ is not Gorenstein if and only if there exists a non-zero $R$-module $M$ with  $ \delta_R(M)=0$  and $\emph\id_{R}(M)<\infty $. 
 \end{itemize}
 \end{rmk}
\begin{proof}
 (a). Suppose that $R$ is regular. Assume contrarily that there exists a non-zero $R$--module $ M $ such that $\delta_R(M)=0$.  By definition of delta, there exists  a surjective homomorphism $ X\longrightarrow M $ such that $ X $ is maximal Cohen-Macaulay $R$-module  with no free direct summand. On the other hand, as $R$ is regular,   $ \pd_R(X)=0 $ and so  $ X $ is free a $R$-module which  is not the case.
 
 Conversely, by assumption and Proposition \ref{a},  $ 1\leq\delta_R(R/\fm)\leq \mu(R/\fm)=1$. Hence,  by Proposition \ref{a}, $R$ is regular.
  
 (b). Assume  that $R$ is not Gorenstein.  As $R$ is   indecomposable then $\omega_R$ is maximal Cohen-Macaulay with no free direct summand and so  we trivially have   $ \delta_R(\omega_{R})=0 $ with $ \id_R(\omega_{R})<\infty $.
 
 Conversely, assume contrarily that  $R$ is Gorenstein. As $\id_R(M)<\infty$, we have $ \pd_R(M)<\infty $,  by  \cite[Exercise 3.1.25]{BH}.   Hence Proposition \ref{a} implies that $ 0=\delta_R(M)=\mu(M)$ which contradicts the fact that $ M\neq 0$.
 \end{proof}

 \section{Gorenstein non-regular rings}

   For an integer $n\geq 0$ and an $R$--module $M$, we denote $\delta_{R}^{n}(M):=\delta_R(\Omega^{n}_{R}(M)) $ as the higher delta invariant, where $ \Omega^{n}_{R}(M) $ is $ n $th  syzygy module of $ M $ in its minimal free resolution. The following result will be used in characterizing a ring to be non-regular Gorenstein of dimensions $\leq 2$.

   \begin{prop}\label{sa}
  Assume that $(R, \fm)$ is a $1$-dimensional  Gorenstein  local ring and that $ I $ is an $ \fm $-primary ideal of $R$ such that $ \mu_R(I)\geq2 $. Then 
  $\delta_R(I^{n}/I^{n+1})=0 $ and $ \delta_R^{m}(R/I^{n})=0 $ for all positive integers $ n $ and $ m $. 
    \end{prop}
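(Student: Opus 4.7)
The plan is to reduce both vanishings to the single claim that, for every $n \geq 1$, the ideal $I^{n}$ and every syzygy $\Omega^{m}_{R}(R/I^{n})$ with $m \geq 1$ are maximal Cohen-Macaulay $R$-modules with no non-zero free direct summand. Each such module is its own minimal Cohen-Macaulay approximation, so by the identification of $\delta_{R}$ recalled before Proposition \ref{a}, its $\delta_{R}$ equals the maximal rank of a free direct summand, namely $0$. The surjection $I^{n} \twoheadrightarrow I^{n}/I^{n+1}$ together with Proposition \ref{a}(ii) then gives $\delta_{R}(I^{n}/I^{n+1}) \leq \delta_{R}(I^{n}) = 0$, and $\delta^{m}_{R}(R/I^{n}) = \delta_{R}(\Omega^{m}_{R}(R/I^{n})) = 0$ follows once $\Omega^{m}_{R}(R/I^{n})$ is known to have no free summand.

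For $I^{n}$: as $R$ is $1$-dimensional Cohen-Macaulay and $I^{n}$ is $\fm$-primary, $I^{n}$ contains a non-zero-divisor and hence is MCM. Assume $I^{n} = Ra \oplus K$ with $Ra \cong R$ free. Then $a$ is a non-zero-divisor and, for each $b \in K$, $ab \in Ra \cap K = 0$ forces $b = 0$, so $I^{n} = Ra$ is principal and thus an invertible fractional ideal. Setting $J := I^{n-1} a^{-1}$ yields $IJ = R$, so $I$ is also invertible, and since $R$ is local, $I$ must be principal. This contradicts $\mu_{R}(I) \geq 2$, so $I^{n}$ has no free summand; this yields both $\delta_{R}(I^{n}/I^{n+1}) = 0$ and $\delta^{1}_{R}(R/I^{n}) = \delta_{R}(I^{n}) = 0$.

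For $m \geq 2$, set $M := \Omega^{m-1}_{R}(R/I^{n})$, which is MCM and satisfies $\Ext^{1}_{R}(M, R) = 0$ because $R$ is Gorenstein. Applying $\Hom_{R}(-, R)$ to the short exact sequence $0 \to \Omega^{m}_{R}(R/I^{n}) \to F \to M \to 0$ extracted from the minimal free resolution of $R/I^{n}$ produces an exact sequence $0 \to M^{*} \to F^{*} \to (\Omega^{m}_{R}(R/I^{n}))^{*} \to 0$. If $\Omega^{m}_{R}(R/I^{n}) \cong R \oplus N$ had a free summand, then $(\Omega^{m}_{R}(R/I^{n}))^{*} \cong R \oplus N^{*}$ and the composition $F^{*} \twoheadrightarrow (\Omega^{m}_{R}(R/I^{n}))^{*} \twoheadrightarrow R$ would be surjective. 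But this composition is evaluation at the image $e \in F$ of a generator of the free summand of $\Omega^{m}_{R}(R/I^{n})$, so its image is the ideal of $R$ generated by the components of $e$ under the identification $F \cong R^{b}$. Minimality of the resolution forces $\Omega^{m}_{R}(R/I^{n}) \subseteq \fm F$, so those components lie in $\fm$ and the image is contained in $\fm$, contradicting surjectivity.

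The duality step of the $m \geq 2$ case is the main obstacle. Over arbitrary local rings, syzygies in a minimal resolution can admit free summands---for instance, $\Omega_{R}(k) \cong R$ over a discrete valuation ring, where $\mu_{R}(\fm) = 1$---so minimality alone cannot suffice. The Gorenstein hypothesis enters precisely through the vanishing $\Ext^{1}_{R}(M, R) = 0$ for the MCM module $M$, which makes the dualized sequence short exact and allows the containment $\Omega^{m}_{R}(R/I^{n}) \subseteq \fm F$ to force the contradiction.
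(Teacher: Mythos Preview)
Your proof is correct. The overall strategy---show the relevant modules are maximal Cohen--Macaulay with no free direct summand, then read off $\delta_R=0$---matches the paper's, but the tactics differ in two places. For $\delta_R(I^n)=0$, the paper works only with $I$ itself: since $I$ is MCM with $\mu_R(I)\ge 2$ it has no free summand, so $\delta_R(I)=0$; then the single observation that the natural surjection $\bigoplus^{\mu_R(M)} I \twoheadrightarrow IM$ forces $\delta_R(IM)=0$ for \emph{every} finite $M$ handles all $I^n=I\cdot I^{n-1}$ and all $I^n/I^{n+1}=I\cdot(I^{n-1}/I^{n+1})$ at once. You instead prove directly that each $I^n$ has no free summand via the invertibility argument, which is a nice self-contained alternative but does a bit more work per $n$. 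For $m\ge 2$, the paper simply invokes the fact recorded after \cite[Proposition~5.3]{AB} that the minimal syzygy of an MCM module has $\delta_R=0$; your dualization argument (using $\Ext^1_R(M,R)=0$ for $M$ MCM over the Gorenstein ring $R$, then exploiting $\Omega^m\subseteq \fm F$) is exactly a proof of that fact in this setting, so your version is more self-contained while the paper's is shorter by outsourcing the lemma.
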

   \begin{proof}
    The assumption  $ \dim(R/I)=0 $ and the exact sequence $ 0\longrightarrow I\longrightarrow R\longrightarrow R/I\longrightarrow0 $ imply that $ I $ is a maximal Cohen-Macaulay as an $R$--module.  As $ \mu_R(I)\geq2 $,  $ I $ has no free direct summand and so $ \delta_R(I)=0 $.  For a finite $R$-module $M$, the natural epimorphism  $ \overset{\mu_R(M)}{\oplus}I \longrightarrow IM $, by Proposition \ref{a}, implies that $ \delta_R(IM)\leq \delta_R(\overset{t}{\oplus}I)=0 $ which gives $ \delta_R(IM)=0 $. Let $ n $ and $ m $ be positive integers. As $ \Omega^{m}_{R}(R/I^{n}) $ is a maximal Cohen-Macaulay $R$-module for all $ m\geq1 $, we have $ \delta_R^{m}(R/I^{n})=0 $ for all $ m>1 $ (see the paragraph just after  \cite[Proposition 5.3]{AB}). For the case $ m=1 $ we have   $\delta_R^{1}(R/I^{n})=\delta_R(\Omega^{1}_{R}(R/I^{n}))=\delta_R(I^{n})=\delta_R(II^{n-1})=0 $.
    \end{proof}
      
       For an $R$-regular element $ x$ in $\fm $, we set $ \overline{(-)}=(-)\otimes_R R/xR$.   Recall from the first paragraph of section 5 of \cite{AB} that an $R$-module $ M $ is called {\it weakly liftable} on $ \overline{R}$ if $ M $ is a direct summand of $\overline{N} $ for some $R$-module $N$.  We recall the following result from  \cite[Proposition 5.7]{AB} and provide a proof by using of \ref{sa} for convenience of the reader.
           \begin{rmk} \label{12}
                     Suppose that $(R, \fm, k)$ is a  Gorenstein  local ring of dimension $ d\geq1 $. If $R$ is not regular then $ \delta_R^{n}(R/\fm)=0 $ for all  integer $ n\geq0 $.
                     \end{rmk}
                     \begin{proof}
                         Suppose that $R$ is not regular. If $ n=0 $ then we have nothing to prove. For $ n\geq1 $, we prove by induction on $ d $. If $ d=1 $  then  $ \mu_R(\fm)\geq2$  and result follows by proposition \ref*{sa}. Now assume that $ d \geq 2 $ and that the  result has been proved for $< d $. Choose an $R$-regular element $ x\in \fm \setminus \fm^{2} $, set $ \overline{R}=R/xR$ and $ \overline{\fm}=\fm/xR$. We have,  by \cite[Corollary 3.5]{CB}, $ \overline{\Omega^{n}_{R}(k)}\cong \Omega^{n}_{\overline{R}}(k)\oplus \Omega^{n-1}_{\overline{R}}(k) $ and so
                          \[\begin{array}{rl} 
                               \delta_R^{n}(k)=\delta_R(\Omega^{n}_{R}(k))&\leq\delta_{\overline{R}}(\overline{\Omega^{n}_{R}(k)})\\
                                &=\delta_{\overline{R}}(\Omega^{n}_{\overline{R}}(k))+\delta_{\overline{R}}(\Omega^{n-1}_{\overline{R}}(k))\\
                               \text{(by induction hypothesis)} &=0+0\\
                                &=0.\\
                                \end{array}\]
                        
                     \end{proof} 
    \begin{cor} \label{7}
   	Assume that $(R, \fm)$ is a Gorenstein  local ring of positive dimension $d\leq2$. Then the following statements are equivalent.
   	\begin{itemize}
   		\item [(a)]  $R$ is not regular.
   		\item [(b)] There exists an  $ \fm $-primary ideal $I$ of $R$ such that
   		
   		\begin{itemize}
   			\item[(i)]  $ \mu_R(I)\geq d+1 $,
   			\item[(ii)] $ I^{d-1}/I^{d} $ is a free $R/I $-module, 
   			\item[(iii)]  There exists $R$-regular element $ x\in I^{d-1} \setminus I^{d} $ such that the natural map $ R/I\overset{x.}{\longrightarrow} I^{d-1}/I^{d} $ is injective and $\delta_{R/xR}\left( R/I\right)=0$.
   		\end{itemize}
   		\item [(c)] There exists a non-zero ideal $I$ of $R$ such that $ \delta_R^{n}(R/I)=0 $ for all positive integers $  n\geq1 $.
   	\end{itemize}
   \end{cor}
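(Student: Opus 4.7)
The plan is to establish the two equivalences (a)$\Leftrightarrow$(b) and (a)$\Leftrightarrow$(c) independently; together they give (b)$\Leftrightarrow$(c) automatically. In both (a)$\Rightarrow$(b) and (a)$\Rightarrow$(c) I take $I=\fm$, and the reverse implications are handled by contradiction.

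The easier half is (a)$\Leftrightarrow$(c). For (a)$\Rightarrow$(c), Remark \ref{12} applied to $R$, which is non-regular Gorenstein of dimension $d\in\{1,2\}$, immediately gives $\delta_R^n(R/\fm)=0$ for every $n\ge 0$. For (c)$\Rightarrow$(a), suppose toward contradiction that $R$ is regular. Then $k:=\pd_R(R/I)$ is finite, and $k\ge 1$ since $I\neq 0$ forces $R/I$ not to be free. In the minimal free resolution of $R/I$, the $k$-th syzygy $\Omega^{k}_R(R/I)$ coincides with $F_k$, a non-zero free $R$-module, so Proposition \ref{a}(v) yields $\delta_R^{k}(R/I)=\mu_R(\Omega^{k}_R(R/I))>0$, contradicting (c).

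For (a)$\Rightarrow$(b) I take $I=\fm$. Condition (i) is equivalent to $R$ being non-regular since $\mu_R(\fm)\ge d$ always, with equality precisely when $R$ is regular. Condition (ii) is automatic because $\fm^{d-1}/\fm^d$ is a $k$-vector space and hence free over $R/\fm=k$. For (iii) in the case $d=2$, prime avoidance applied to $\fm^2$ together with the associated primes of $R$ (all strictly smaller than $\fm$ since $R$ is Cohen--Macaulay of positive dimension) produces an $R$-regular $x\in\fm\setminus\fm^2$; the map $k\to\fm/\fm^2$ sending $1\mapsto x+\fm^2\ne 0$ is injective, and $R/xR$ is non-regular Gorenstein of dimension $1$ (if $R/xR$ were regular, then lifting its regular system of parameters together with $x$ would make $R$ itself regular), so Remark \ref{12} gives $\delta_{R/xR}(R/\fm)=0$. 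In the case $d=1$ one takes $x$ to be any unit, so the map $R/\fm\to R/\fm$ is the identity and the $\delta$-condition holds vacuously.

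For (b)$\Rightarrow$(a), argue by contradiction and suppose $R$ is regular. If $d=1$, then $R$ is a discrete valuation ring and every ideal is principal, contradicting $\mu_R(I)\ge 2$. If $d=2$, then $\pd_R(R/I)<\infty$ by regularity, and condition (ii) lets one invoke the Ferrand--Vasconcelos theorem to conclude that $I$ is generated by an $R$-regular sequence of length $\mu_R(I)\ge 3$; this is impossible because $\depth(R)=d=2$. The main obstacle is precisely this last step: one must recognise that the freeness of $I/I^2$ over $R/I$, combined with the finite projective dimension forced by regularity, triggers the Ferrand--Vasconcelos criterion and yields a regular sequence incompatible with $\depth(R)$.
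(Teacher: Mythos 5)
Your argument is correct, but it is organized quite differently from the paper's. The paper proves the cycle (a)$\Rightarrow$(b)$\Rightarrow$(c)$\Rightarrow$(a), and its substantive step is (b)$\Rightarrow$(c): there one shows that the \emph{same} ideal $I$ witnessing (b) already witnesses (c), by verifying that $R/I$ is weakly liftable over $\bar R=R/xR$ (via the splitting of $0\to R/I\to I/xI\to I/xR\to 0$), using the resulting decomposition $\overline{\Omega^{n}_{R}(R/I)}\cong\Omega^{n}_{\bar R}(R/I)\oplus\Omega^{n-1}_{\bar R}(R/I)$, and then applying Proposition~\ref{sa} to the one-dimensional ring $\bar R$ together with hypothesis (iii); this serves as a warm-up for Theorem~\ref{sf}. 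You bypass that step entirely: you get (a)$\Rightarrow$(c) directly from Remark~\ref{12} with $I=\fm$, and you close the loop with a direct proof of (b)$\Rightarrow$(a) via the Ferrand--Vasconcelos theorem (over a regular $R$ of dimension $2$, finiteness of $\pd_R(R/I)$ together with freeness of $I/I^{2}$ over $R/I$ forces $I$ to be generated by a regular sequence of length $\mu_R(I)\ge 3>\depth R$). Your route is shorter and more modular, but it imports an external theorem the paper never invokes, and it proves strictly less: it does not show that the particular ideal of (b) satisfies the vanishing in (c), which is the real content of the paper's (b)$\Rightarrow$(c). Your (c)$\Rightarrow$(a) (the last syzygy of $R/I$ is a nonzero free module when $R$ is regular) is essentially the paper's argument ($\delta_R^{1}(R/I)=\delta_R(I)$ combined with Remark~\ref{n}(a)), and both treatments handle $d=1$ in (b)(iii) by the same degenerate choice of a unit for $x$.
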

   \begin{proof}
   	(a)$\Rightarrow$(b).  We set $ I=\fm $. If $ d=1 $ the result is trivial. For $ d=2 $, choose an $R$-regular element $ x\in \fm \setminus \fm^{2} $. Hence the map $ R/\fm\longrightarrow \fm/\fm^{2} $, with $a+\fm\rightsquigarrow ax+\fm^2$, is injective. As $ R/xR $ is not  regular,  $\delta_{R/xR}\left( R/\fm\right)=\delta_{R/xR}\left( (R/xR)/(\fm/xR) \right)=0 $ by Remark \ref{12} and also it is clear $ \fm/{\fm}^{2} $ is free $ R/\fm $ module.
   	
   	(b)$\Rightarrow$(c).   Set $ \overline{R}=R/xR$ and assume that $ n\geq1 $. If $ d=1 $, the result follows  by Proposition \ref{sa}. Let $ d=2 $. As $ \dim(\overline{R})=1 $ and $ \mu_{\bar R}(\bar I)\geq 2 $, by Proposition \ref{sa} and assumption (iii), we have  $ \delta_{\overline{R}}(\Omega^{n}_{\overline{R}}(R/I))=\delta_{\overline{R}}(\Omega^{n}_{\overline{R}}(\overline{R}/\overline{I}))=0$ and $\delta_{\overline{R}}(\Omega^{n-1}_{\overline{R}}(R/I))=\delta_{\overline{R}}(\Omega^{n-1}_{\overline{R}}(\overline{R}/\overline{I}))=0$. On the other hand, $ \dim R/I=0 $ and the exact sequence $$ 0 \longrightarrow R/I\longrightarrow I/I^{2}\longrightarrow I/(xR+I^{2})\longrightarrow 0 $$  imply that $ \pd_{R/I}(I/(xR+I^{2})) =0 $ by Auslander-Buchsbaum formula.  Therefore, the commutative diagram 
   	\begin{center}
   		$\begin{array}{cccccccccc}
   		0 & \longrightarrow  & R/I & \overset{x.}{\longrightarrow} & I/xI & \longrightarrow &I/xR &\longrightarrow &0& \\ 
   		& & \parallel &  & \downarrow  &  & \downarrow\\ 
   		0 & \longrightarrow & R/I & \overset{x.}{\longrightarrow} &I/I^{2}  & \longrightarrow &I/(xR+I^{2})  &\longrightarrow  & 0
   		\end{array}  $
   	\end{center} 
   with exact rows implies that the upper row 
   	splits. Hence $R/I $, as $\bar{R}$--module,  is weakly liftable on $\bar{R}$.   Thus, by \cite[Proposition 5.2]{CB}, we obtain 
   	$ \overline{\Omega^{n}_{R}(R/I)}\cong \Omega^{n}_{\overline{R}}(R/I)\oplus \Omega^{n-1}_{\overline{R}}(R/I) $. 
   	Therefore 
   	 \[\begin{array}{rl} 
   	     \delta_R^{n}(R/I)= \delta_R(\Omega^{n}_{R}(R/I))&\leq\delta_{\overline{R}}(\overline{\Omega^{n}_{R}(R/I)})\\
   	       &=\delta_{\overline{R}}(\Omega^{n}_{\overline{R}}(R/I))+\delta_{\overline{R}}(\Omega^{n-1}_{\overline{R}}(R/I))\\
   	       &=0+0\\
   	       &=0.\\
   	       \end{array}\]
   	
   	(c)$\Rightarrow$(a).   By assumption $ \delta_R(I)=0 $ therefore, by Remark \ref{n}, $R$ is not regular.
   \end{proof}

      It is shown by Yoshino \cite[Theorem 2.3]{y} that, in a non-regular Gorenstein local ring $(R, \fm)$ with  $\depth ( \ag_\fm(R) )\geq d-1$, one has $ \delta_R^{n}(R/\fm^{m})=0 $ for all positive integers $ n $ and $ m $, where $ \ag_\fm(R) $ denote the associate graded ring of $R$ with respect to $\fm$. In order to present a generalization of this theorem, we first bring a lemma. For an ideal $I$ of a ring $R$, we set  $ G:=\ag_I(R) $ as the associated ring of $R$ with respect to $I$.
      
     \begin{lem}\label{sb}
       Assume that $(R, \fm)$ is a local ring  and that $ I $ is an $ \fm $-primary ideal of $R$ such that $ I^{i}/I^{i+1} $ is free $R/I $-module for all $ i\geq0 $ $($ e.g. $ I $ an Ulrich ideal of $R$, see  \cite[Proposition 3.2]{K} $ ) $. Suppose that $ x\in I\setminus I^2 $  such that $ x^{\ast}:=x+I^2 $ is $ G $-regular element  in $ G $.  Set $ \bar{R}=R/xR$. Then, for any $n\geq 0$, we have  $ \Omega^{n}_R(I^{m})\otimes_R \bar{R}\cong  \Omega^{n}_{\bar{R}}(I^{m-1}/I^{m})\oplus \Omega^{n}_{\bar{R}}(I^{m}/xI^{m-1}) $ for all $ m\geq1 $. 
      \end{lem}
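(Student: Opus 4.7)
The plan is to prove the lemma by first reducing to the base case $n=0$ via a minimal free resolution argument, and then establishing the base case through an explicit splitting of a short exact sequence obtained from a Tor calculation.

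First I would record the foundational consequences of the $G$-regularity of $x^{\ast}$. Directly from the definition, if $a \in I^{k-1}$ satisfies $xa \in I^{k+1}$, then $a \in I^{k}$. Iterating this for any $a$ with $xa=0$ gives $a \in \bigcap_{k}I^{k}$, which is zero by Krull's intersection theorem since $I$ is $\fm$-primary; hence $x$ is $R$-regular. The same reasoning yields $(I^{k}:x)=I^{k-1}$ and $(xR)\cap I^{k}=xI^{k-1}$ for all $k\geq 1$.

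Next I would reduce the statement to the case $n=0$. Because $x$ is $R$-regular, it is $\Omega^{j}_{R}(I^{m})$-regular for every $j\geq 0$ (syzygies embed in free modules), so $\emph{Tor}^{R}_{i}(I^{m},\bar R)=0$ for $i\geq 1$. Thus, tensoring a minimal $R$-free resolution $F_{\bullet}\to I^{m}$ with $\bar R$ produces an exact complex $F_{\bullet}/xF_{\bullet}\to I^{m}/xI^{m}$ of free $\bar R$-modules that remains minimal because $x\in \fm$. This gives $\Omega^{n}_{R}(I^{m})\otimes_{R}\bar R\cong \Omega^{n}_{\bar R}(I^{m}/xI^{m})$ for all $n\geq 0$, and by additivity of $\Omega^{n}_{\bar R}$ over direct sums the lemma reduces to showing the base case
\[
I^{m}/xI^{m}\cong (I^{m-1}/I^{m})\oplus (I^{m}/xI^{m-1}).
\]

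For the base case, I would tensor the short exact sequence $0\to I^{m}\to R\to R/I^{m}\to 0$ with $\bar R$. Using the resolution $0\to R\xrightarrow{\cdot x}R\to \bar R\to 0$ and the equalities from Step 1, one identifies $\emph{Tor}^{R}_{1}(R/I^{m},\bar R)=(I^{m}:x)/I^{m}=I^{m-1}/I^{m}$ and the image of $I^{m}\otimes \bar R$ in $\bar R$ with $(I^{m}+xR)/xR=I^{m}/xI^{m-1}$, yielding
\[
0\to I^{m-1}/I^{m}\xrightarrow{\cdot\bar x} I^{m}/xI^{m}\to I^{m}/xI^{m-1}\to 0.
\]
I would split this sequence by exploiting the hypothesis: choose an $R/I$-basis $\bar b_{1},\ldots,\bar b_{r}$ of $I^{m-1}/I^{m}$, lift to $b_{i}\in I^{m-1}$, and note that $x^{\ast}\bar b_{1},\ldots,x^{\ast}\bar b_{r}$ form an $R/I$-independent set inside the free module $G_{m}$ by $G$-regularity. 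Extending this to a basis of $G_{m}$ and running the same procedure along the $I$-adic filtration $I^{m}/xI^{m}\supseteq (I^{m+1}+xI^{m})/xI^{m}\supseteq \cdots$ (whose subquotients are $G_{m}$ and $\bar G_{m+k}=G_{m+k}/x^{\ast}G_{m+k-1}$ for $k\geq 1$, by Valabrega--Valla) produces a compatible retraction $I^{m}/xI^{m}\twoheadrightarrow I^{m-1}/I^{m}$ of the inclusion $\cdot\bar x$.

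The main obstacle is verifying that this retraction is well defined, and at its heart it amounts to showing that $x^{\ast}G_{k-1}$ is an $R/I$-direct summand of $G_{k}$ for all $k\geq m$, equivalently that $\bar G_{k}$ is $R/I$-free. Under the polynomial-ring-like structure of $G$ forced by the combination of $G_{i}$ being $R/I$-free for all $i$ and $x^{\ast}$ being $G$-regular (which is the content of Proposition 3.2 of \cite{K} in the Ulrich setting), this freeness holds, and the decomposition of $G_{m}$ lifts through the filtration to the required direct-sum decomposition of $I^{m}/xI^{m}$.
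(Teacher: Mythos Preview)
Your reduction to $n=0$ is correct and matches the paper's argument: the paper proceeds by induction on $n$, using that tensoring a minimal free cover of $\Omega^{n-1}_R(I^m)$ with $\bar R$ yields a minimal free cover over $\bar R$, which is your Tor-vanishing observation packaged inductively. Your derivation of the short exact sequence $0\to I^{m-1}/I^m\xrightarrow{\cdot x} I^m/xI^m\to I^m/xI^{m-1}\to 0$ is also fine, as are the preliminary consequences of $G$-regularity (which the paper leaves implicit).

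The gap is in the splitting of that sequence. You correctly isolate the crux --- that $x^{\ast}G_{m-1}$ must be an $R/I$-summand of $G_m$, equivalently that $G_m/x^{\ast}G_{m-1}$ is $R/I$-free --- but your justification (``polynomial-ring-like structure'' together with \cite[Proposition~3.2]{K}) is not a proof: that proposition supplies the \emph{hypothesis} of the lemma for Ulrich ideals, not the summand property you need. The missing argument is short: from $0\to G_{m-1}\xrightarrow{x^{\ast}}G_m\to G_m/x^{\ast}G_{m-1}\to 0$ with $G_{m-1},G_m$ free over the Artinian local ring $R/I$, the cokernel has projective dimension $\leq 1$, hence is free by Auslander--Buchsbaum. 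Moreover, once you have a retraction $\rho:G_m\to G_{m-1}$ of $x^{\ast}$, the composite $I^m/xI^m\twoheadrightarrow G_m\xrightarrow{\rho}G_{m-1}$ (the first map is well defined since $xI^m\subseteq I^{m+1}$) already retracts the inclusion $\cdot x$, so your descent through the filtration $(I^{m+k}+xI^m)/xI^m$ and the freeness of the higher $\bar G_{m+k}$ are unnecessary. The paper sidesteps all of this by citing \cite[Lemma~2.3]{K} directly for the decomposition $I^m/xI^m\cong I^{m-1}/I^m\oplus I^m/xI^{m-1}$.
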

      \begin{proof}
     As $ x^{\ast} $ is a $ G $-regular element  in $ G $,  the  map  $ I^{m-1}/I^{m}\overset{x.}{\longrightarrow} I^{m}/I^{m+1}  $ is injective for all $ m\geq1 $. We prove the claim  by induction on $ n $. By \cite[Lemma 2.3]{K} we have $ I^{m}/xI^{m}\cong I^{m-1}/I^{m}\oplus I^{m}/xI^{m-1}$.  
     
      Therefore
       \[\begin{array}{rl} 
     \Omega^{0}_R(I^{m})\otimes_R \bar{R}&=I^{m}\otimes_R \bar{R} \\
      &\cong I^{m}/xI^{m}\\
      &\cong I^{m-1}/I^{m}\oplus I^{m}/xI^{m-1}\\
      &=\Omega^{0}_{\bar{R}}(I^{m-1}/I^{m})\oplus \Omega^{0}_{\bar{R}}(I^{m}/xI^{m-1})
      \end{array},\] 
      which proves the claim for $n=0$.
      
      Now we assume that  $ n> 0 $ and the claim  is settled for integers less than $ n $. Note that a minimal free cover $ 0\longrightarrow  \Omega^{n}_R(I^{m})\longrightarrow F \longrightarrow  \Omega^{n-1}_R(I^{m})\longrightarrow 0$ of $ \Omega^{n-1}_R(I^{m}) $ gives a minimal cover 
      \begin{center}
       $ 0\longrightarrow  \Omega^{n}_R(I^{m})\otimes_R \bar{R}\longrightarrow F\otimes_R \bar{R} \longrightarrow  \Omega^{n-1}_R(I^{m})\otimes_R \bar{R}\longrightarrow 0$ 
       \end{center}
       of $\Omega^{n-1}_R(I^{m})\otimes_R \bar{R}$  over $ \bar{R} $. Hence we get 
       $ \Omega^{n}_R(I^{m})\otimes_R \bar{R}\cong \Omega^{1}_{\bar{R}}(\Omega^{n-1}_R(I^{m})\otimes_R \bar{R})$.
        By  induction hypothesis we have  
        \[\begin{array}{rl}  \Omega^{n}_R(I^{m})\otimes_R \bar{R}&\cong \Omega^{1}_{\bar{R}}(\Omega^{n-1}_R(I^{m})\otimes_R \bar{R})\\
        &\cong \Omega^{1}_{\bar{R}}(\Omega^{n-1}_{\bar{R}}(I^{m-1}/I^{m})\oplus \Omega^{n-1}_{\bar{R}}(I^{m}/xI^{m-1}))\\
        &\cong \Omega^{n}_{\bar{R}}(I^{m-1}/I^{m})\oplus \Omega^{n}_{\bar{R}}(I^{m}/xI^{m-1}).  
    \end{array}\]   
      \end{proof}
       
    \begin{thm} \emph{(Compare \cite[Theorem 2.3]{y})}\label{sf}
    Suppose that $(R, \fm)$ is a  Gorenstein local ring of dimension $d$ with infinite residue field $R/\fm $. Assume that $ I $ is an $ \fm $-primary ideal of $R$ such that:
       	\begin{itemize}
             	\item[(i)]   For any $ i\geq0 $, $ I^{i}/I^{i+1} $ is free $R/I $-module, and
     	\item [(ii)] for any $R$-regular sequence ${\bf x}=x_1,\cdots, x_s$ in $ I $ with  $$ x_i+(x_1,\cdots, x_{i-1}) \in ( I/(x_1,\cdots, x_{i-1}))\setminus (I/(x_1,\cdots, x_{i-1}))^{2}, \ 1\leq i\leq s,$$  we have $\delta^{n}_{R/{\bf x}R}\left( R/I\right)=0$ for all $ n\geq0 $. 
             	\end{itemize} 
          Then  $ \delta_R^{n}(R/I^{m})=0 $ for all  integers $  n\geq d+1-\emph\depth G $ and all $ m\geq1 $. In particular, if $  \emph\depth G  =d-1 $, then  $ \delta_R^{n}(R/I^{m})=0 $ for all $ n \geq 2 $ and all $ m\geq1 $.
    \end{thm}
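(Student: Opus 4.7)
The plan is to induct on $s := \depth G$, observing that passing to $\bar R = R/xR$ for a suitable $x$ decreases both $d$ and $s$ by one, so the target bound $n \geq d+1-s$ is invariant under the reduction. For the base case $s = 0$ I would argue directly: for $n \geq d+1 \geq 1$, minimality of the free resolution of $R/I^{m}$ forces $\Omega_{R}^{n}(R/I^{m})$ to admit no free direct summand, while a depth count places it in the maximal Cohen-Macaulay range. Any MCM $R$-module $M$ over a Gorenstein ring with no free direct summand has $M^{\mathrm{cm}} = M$ (take $L = M$, $\phi = \mathrm{id}$ in the definition), and so $\delta_{R}(M) = 0$.

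For the inductive step $s \geq 1$, since $R/\fm$ is infinite I can choose a superficial element $x \in I \setminus I^{2}$ that is both $R$-regular and whose leading form $x^{\ast} = x + I^{2}$ is a $G$-regular element. Set $\bar R = R/xR$ and $\bar I = I\bar R$; then $\bar R$ is Gorenstein of dimension $d-1$ and $G(\bar R, \bar I) \cong G/x^{\ast}G$ has depth $s-1$, preserving $t = d - s$. The crucial bookkeeping is that the theorem's hypotheses descend to $(\bar R, \bar I)$. Hypothesis (ii) is easy: any $\bar R$-regular sequence $\bar y_{1},\ldots,\bar y_{r}$ in $\bar I$ satisfying the square condition lifts to an $R$-regular sequence $x, y_{1}, \ldots, y_{r}$ in $I$ satisfying the same condition, so hypothesis (ii) for $R$ applied to this longer sequence yields the required vanishing over $\bar R/\bar{\mathbf{y}}\bar R = R/(x,y_{1},\ldots,y_{r})R$. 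The propagation of (i), that $(\bar I)^{i}/(\bar I)^{i+1}$ is free over $\bar R/\bar I \cong R/I$, is the main technical point, and I would derive it from the $G$-regularity of $x^{\ast}$ together with the Ulrich-ideal preservation results of \cite{K} (as already exploited in the proof of Lemma \ref{sb}).

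Granting this descent, the inductive hypothesis gives $\delta_{\bar R}^{n}(\bar R/\bar I^{m}) = 0$ for all $n \geq t+1$ and all $m \geq 1$. For such $n \geq t+1 \geq 1$, Lemma \ref{sb} with our $x$ produces
\[
\Omega_{R}^{n-1}(I^{m}) \otimes_{R} \bar R \cong \Omega_{\bar R}^{n-1}(I^{m-1}/I^{m}) \oplus \Omega_{\bar R}^{n-1}(\bar I^{m}).
\]
The second summand has $\delta_{\bar R}$ equal to $\delta_{\bar R}^{n}(\bar R/\bar I^{m}) = 0$ by induction. Using hypothesis (i), $I^{m-1}/I^{m} \cong (R/I)^{r_{m-1}}$, so the first summand is $r_{m-1}$ copies of $\Omega_{\bar R}^{n-1}(R/I)$, whose $\delta_{\bar R}$ vanishes by hypothesis (ii) applied with $\mathbf{x} = (x)$. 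Additivity of $\delta$ on direct sums (Proposition \ref{a}(i)) combined with the inequality $\delta_{R}(M) \leq \delta_{\bar R}(M \otimes_{R} \bar R)$ of \cite[Corollary 2.5]{CA} then give $\delta_{R}^{n}(R/I^{m}) = \delta_{R}(\Omega_{R}^{n-1}(I^{m})) = 0$, closing the induction. The principal obstacle, as indicated, is the careful verification that hypothesis (i) descends from $R$ to $\bar R$, since a cokernel of free modules over $R/I$ need not be free without leveraging the particular structure provided by $G$-regularity of $x^{\ast}$.
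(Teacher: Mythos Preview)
Your proposal is correct and follows essentially the same approach as the paper: the base case $\depth G=0$ is handled by observing that $\Omega_R^{n}(R/I^{m})$ is maximal Cohen--Macaulay with no free summand for $n\geq d+1$ (the paper cites \cite[Corollary~1.2.5]{Av} for this), and the inductive step combines Lemma~\ref{sb} with the inequality $\delta_R(M)\leq\delta_{\bar R}(\overline{M})$ of \cite[Corollary~2.5]{CA}, using hypothesis~(ii) to kill $\delta_{\bar R}^{n-1}(I^{m-1}/I^{m})$ and the inductive hypothesis to kill $\delta_{\bar R}^{n}(\bar R/\bar I^{m})$. The paper presents the descent as an iterated case analysis ($d=1$; $d\geq2$ with $t=1$; $t\geq2$; \ldots) rather than a clean induction on $\depth G$, but the content is the same; your identification of the descent of hypothesis~(i) as the main technical point is accurate, and the paper handles it exactly as you suggest, via \cite[Lemma~2.3]{K} showing $(\bar I)^{m}/(\bar I)^{m+1}$ is a direct summand of the free $R/I$-module $I^{m}/I^{m+1}$.
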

    \begin{proof}
    Let $ m\geq1 $. If $ d=0 $ the result is trivial. We assume that $ d>0  $ and  $  n\geq d+1-\emph\depth G $.  
    
    If $\depth G=0 $ then $ n\geq d+1 $ and the result is clear by \cite[Corollary 1.2.5]{Av}. Now assume that $ d>0  $ and $ \depth G>0 $. As $ R/\fm $ is infinite, \cite[Lemma 2.1]{HM}  implies that  the map $I^{m-1}/I^{m}\overset{x.}{\longrightarrow} I^{m}/I^{m+1}  $ is injective for some $ x\in I \setminus I^{2} $ with $ x^{\ast}:= x+I^2\in G $ is $ G $-regular. Note that  $ \ag_I(I^{m})\subseteq G $  implies that  $ x\in I $ is non-zero-divisor on $ I^{m} $. Set $ \bar{R}=R/xR $ and $ \bar{I}=I/xR $ and  let $ n\geq d-t+1 $.  By Lemma \ref{sb} we have
       \begin{center}
        $ \Omega^{n-1}_R(I^{m})\otimes_R \bar{R}\cong  \Omega^{n-1}_{\bar{R}}(I^{m-1}/I^{m})\oplus \Omega^{n-1}_{\bar{R}}(I^{m}/xI^{m-1}) $.
        \end{center}
        On the other hand $ x $ is $ \Omega^{n-1}_R(I^{m}) $-regular, therefore by  \cite[Corollary 2.5]{CA} we have
        \begin{center}
        $ \delta_R(\Omega^{n-1}_R(I^{m})\leq\delta_{\bar{R}}(\Omega^{n-1}_R(I^{m})\otimes_R \bar{R}) $.
        \end{center}
       Therefore 
      \begin{equation}\label{4.1}
       \begin{array}{rl} 
       \delta_R^{n}(R/I^{m})&=\delta_R(\Omega^{n-1}_R(I^{m})\\
       &\leq\delta_{\bar{R}}(\Omega^{n-1}_R(I^{m})\otimes_R \bar{R})\\
       &=\delta_{\bar{R}}( \Omega^{n-1}_{\bar{R}}(I^{m-1}/I^{m})\oplus \Omega^{n-1}_{\bar{R}}(I^{m}/xI^{m-1}))\\
       &=\delta_{\bar{R}}(\Omega^{n-1}_{\bar{R}}(I^{m-1}/I^{m}))+\delta_{\bar{R}}(\Omega^{n-1}_{\bar{R}}(I^{m}/xI^{m-1})) \\
       &=\delta_{\bar{R}}^{n-1}(I^{m-1}/I^{m})+\delta_{\bar{R}}^{n-1}(I^{m}/xI^{m-1}).
       \end{array}
       \end{equation}
            
       The  injective  map $I^{m-1}/I^{m}\overset{x.}{\longrightarrow} I^{m}/I^{m+1}  $ implies, by induction on $m$, that $xI^{m-1}=xR\cap I^{m}$ so we get $ (\bar{I})^{m}=I^{m}/(xR\cap I^{m})=I^{m}/xI^{m-1} $, therefore 
        \[\begin{array}{rl}
        \delta_R^{n}(R/I^{m}) &\leq\delta_{\bar{R}}^{n-1}(I^{m}/xI^{m-1})+\delta_{\bar{R}}^{n-1}(I^{m-1}/I^{m})\\&
        =\delta_{\bar{R}}^{n-1}(({\bar{I}})^{m})+\delta_{\bar{R}}^{n-1}(I^{m-1}/I^{m}).           
        \end{array}\]
       Note that, by assumption,  $ I^{m-1}/I^{m}\cong \oplus R/I $, and $ \delta_{\bar{R}}^{n-1}(I^{m-1}/I^{m})=\delta_{\bar{R}}^{n-1}(R/I)=0 $.
        
       If $ d=1 $ then $ \dim(\bar R)=0$  so $\delta_{\bar{R}}^{n-1}(({\bar{I}})^{m})= \delta_{\bar{R}}(\Omega^{n-1}_{\bar{R}}(({\bar{I}})^{m}))=\delta^{n}_{\bar{R}}(\bar{R}/(\bar{I})^{m})=0 $  hence result is clear.
       
        Suppose that $ d\geq2 $.  As $ n\geq d-t+1=(d-1)-(t-1)+1 $, when $t=1$,  we have   $ n\geq (d-1)+1 $ hence $ \delta^{n}_{\bar{R}}(\bar{R}/(\bar{I})^{m})=0 $ , therefore  $\delta_R^{n}(R/I^{m})= 0$. 
        
        If $ t\geq2 $  then $  \depth (  gr_{\bar{R}}(\bar{I}))=\depth (  gr_R(I)/x^{\ast}gr_R(I))=t-1>0  $. As $ \bar R/\bar \fm \cong R/\fm $ is infinite and  $ \dim(\bar R)>0  $ and $  \depth (  gr_{\bar{R}}(\bar{I}))>0  $, by \cite[Lemma 2.1]{HM}, there exists  $ \bar y =y+xR\in \bar I \setminus \bar I^{2} $ such that $ \bar y^{\ast} $ is $ \bar G $-regular. Therefore the map  $ \bar I^{m-1}/\bar I^{m}\overset{\bar y}{\longrightarrow} \bar I^{m}/\bar I^{m+1}  $ is injective. On the other hand we have  $ (\bar I)^{m}/(\bar I)^{m+1}\cong I^{m}/(xI^{m-1}+I^{m+1}) $ and, by  \cite[Lemma 2.3]{K},  $I^{m}/(xI^{m-1}+I^{m+1}) $ is a direct summand of $ I^{m}/I^{m+1} $. Therefore  $ (\bar I)^{m}/(\bar I)^{m+1} $ is a free $ \bar R/\bar I $-module for any $ i\geq1 $.  Also $ gr_{\bar I}(\bar I^{m})\subseteq \bar G $   implies that  $ \bar y\in \bar I $ is non-zero divisor on $ \bar I^{m} $. Set $ \bar {\bar R}=\bar R/\bar y \bar R $ and $ \bar {\bar I}=\bar I/\bar y \bar I $. Then by the same argument as above we have $\delta_{\bar R}^{n}(\bar R/\bar I^{m}) \leq\delta^{n}_{\bar{\bar{R}}}(\bar{\bar{R}}/(\bar{\bar{I}})^{m})+\delta_{\bar{\bar{R}}}^{n-1}(\oplus R/I) $.
        
         By our assumption $ \delta_{\bar{\bar{R}}}^{n-1}(\oplus R/I))= \delta_{R/(x,y)}^{n-1}(\oplus R/I))=0 $. When $ d=2 $, $ \dim(\bar{\bar R})=0$ and so $ \delta^{n}_{\bar{\bar{R}}}(\bar{\bar{R}}/(\bar{\bar{I}})^{m})=0 $ and   result is clear. Suppose that $ d\geq3 $. As $ n\geq d-t+1=(d-2)-(t-2)+1 $, if $ t=2 $ then  $ \delta^{n}_{\bar{\bar{R}}}(\bar{\bar{R}}/(\bar{\bar{I}})^{m})=0 $, therefore (\ref{4.1}) implies that
       \[\begin{array}{rl}
            \delta_R^{n}(R/I^{m})
            &\leq \delta^{n}_{\bar{R}}(\bar{R}/(\bar{I})^{m})+\delta_{\bar{R}}^{n-1}(\oplus R/I)) \\
            &\leq\delta^{n}_{\bar{\bar{R}}}(\bar{\bar{R}}/(\bar{\bar{I}})^{m})+\delta_{R/(x,y)}^{n-1}(\oplus R/I)+\delta_{R/xR}^{n-1}(\oplus R/I)\\&
            =0.
          \end{array}\]
          
      For the case $ t\geq3 $, we proceed by the same argument as above to find  $ \delta_R^{n}(R/I^{m})=0$. 
    \end{proof}

    \begin{cor}\label{cor}
       Suppose that $(R, \fm)$ is a  Gorenstein local ring  of dimension $d$ such that $R/\fm $ is infinite. Consider the following statements.
       \begin{itemize}
            	\item [(a)]  $R$ is not regular.
             \item [(b)] There exists an  $ \fm $-primary ideal $I$ of $R$ such that
         \begin{itemize}
        \item[(i)]  $ I^{i}/I^{i+1} $ is free $R/I $-module for any $ i\geq0 $, and
        \item[(ii)]  for any $R$-regular sequence ${\bf x}=x_1,\cdots, x_s$ in $ I $ such that $$ x_i+(x_1,\cdots, x_{i-1}) \in ( I/(x_1,\cdots, x_{i-1}))\setminus (I/(x_1,\cdots, x_{i-1}))^{2},\   1\leq i\leq s,$$  one has $\delta^{n}_{R/{\bf x}R}\left( R/I\right)=0$ for all $ n\geq0 $. 
       \end{itemize}
  \item [(c)] There exists a non-zero ideal $I$ of $R$ such that
  \begin{itemize}                       
  \item[(i)] $ \emph\depth( G )\geq \emph\depth_R(R/I) $, and
 \item[(ii)] $\delta_R^{n}(R/I^{m})=0 $ for all  integers $  n\geq d-\emph\depth( G )+1 $ and $ m\geq1 $. 
  \end{itemize}
\end{itemize}
 Then  the implications (a)$\Rightarrow$(b)  and  (b)$\Rightarrow$(c)  hold true. If $ \emph\depth(G)>\emph\depth_R(R/I) $, the statements (a), (b), and (c) are equivalent. 
       \end{cor}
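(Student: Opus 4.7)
The plan is to deduce the three implications largely from prior material in the paper.

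For (a)$\Rightarrow$(b), I take $I=\fm$. Condition (i) holds trivially since $\fm^i/\fm^{i+1}$ is a $k$-vector space, hence free over $R/\fm=k$. For condition (ii), any regular sequence ${\bf x}=x_1,\ldots,x_s$ satisfying the stated condition is a partial minimal generating set of $\fm$, so $R/{\bf x}R$ is Gorenstein of dimension $d-s$ and embedding dimension $\mu(\fm)-s$; the positive gap $\mu(\fm)-d>0$ is preserved, so $R/{\bf x}R$ remains non-regular. When $d-s\geq 1$, Remark \ref{12} applied to $R/{\bf x}R$ yields $\delta^n_{R/{\bf x}R}(k)=0$ for every $n\geq 0$. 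In the boundary case $d=s$, the ring $R/{\bf x}R$ is Artinian non-regular, so $\delta_{R/{\bf x}R}(k)=0$ by Proposition \ref{a}(iv), while for $n\geq 1$ each $\Omega^n_{R/{\bf x}R}(k)$ is itself maximal Cohen--Macaulay without a free summand by Proposition \ref{f}(c) applied to $R/{\bf x}R$, forcing $\delta=0$ there as well.

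For (b)$\Rightarrow$(c), the conditions (i) and (ii) of (b) are precisely the hypotheses of Theorem \ref{sf}, whose conclusion is exactly (c)(ii). Condition (c)(i) is automatic: since $I$ is $\fm$-primary, $R/I$ is Artinian, so $\depth_R(R/I)=0\leq\depth(G)$.

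For (c)$\Rightarrow$(a) under the extra hypothesis $\depth(G)>\depth_R(R/I)$, I argue by contradiction. If $R$ is regular of dimension $d$, then $R/I$ has finite projective dimension, and Auslander--Buchsbaum gives $p:=\pd_R(R/I)=d-\depth_R(R/I)$. The strict inequality rearranges to $p\geq d-\depth(G)+1$. Since $I$ is a proper non-zero ideal, $p\geq 1$, and $\Omega^p_R(R/I)$ is the last non-zero term of the minimal free resolution, hence a non-zero free $R$-module. Proposition \ref{a}(v) then gives $\delta^p_R(R/I)=\mu(\Omega^p_R(R/I))>0$, contradicting (c)(ii) taken at $n=p$, $m=1$. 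The main obstacle is the (a)$\Rightarrow$(b) step, where one must verify that the regular sequence condition actually preserves non-regularity at every stage; this hinges on the standard observation that killing a non-zero-divisor lying in $\fm\setminus\fm^2$ decreases both the dimension and the embedding dimension by one, so the defect $\mu(\fm)-d$ is unchanged and $R/{\bf x}R$ stays non-regular.
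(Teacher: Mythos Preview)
Your proof is correct and follows essentially the same approach as the paper: take $I=\fm$ for (a)$\Rightarrow$(b) and invoke Remark~\ref{12}, apply Theorem~\ref{sf} for (b)$\Rightarrow$(c), and for (c)$\Rightarrow$(a) use Auslander--Buchsbaum together with the fact that $\delta$ is positive on non-zero modules of finite projective dimension. Your treatment of (a)$\Rightarrow$(b) is in fact slightly more careful than the paper's, since you explicitly justify the non-regularity of $R/{\bf x}R$ via the embedding-dimension count and separately handle the Artinian case $s=d$ (where Remark~\ref{12}, which assumes $d\geq 1$, does not directly apply).
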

       \begin{proof}
        (a)$\Rightarrow$(b). We show that $ I=\fm $ works. Assume that  ${\bf x}=x_1,\cdots, x_s$ is $R$-regular sequence in $ \fm $ such that  $ x_1\in \fm \setminus \fm^{2}$ and $ x_i+(x_1,\cdots, x_{i-1}) \in ( \fm/(x_1,\cdots, x_{i-1}))\setminus (\fm/(x_1,\cdots, x_{i-1}))^{2}$. Set $ \bar{R}=R/(x_1,\cdots, x_{s-1})R $,  $ \bar{\fm}=\fm/(x_1,\cdots, x_{s-1})R $ and $\bar x_s= x_s+(x_1,\cdots, x_{s-1}) $. As $ \bar R/\bar{x_s}\bar R $ is not regular so by  Remark \ref{12} we have, for all $n\geq 0$, 
        \[\begin{array}{rl}
          \delta^{n}_{ R/ (  x_1,\cdots, x_s) R}\left(  R/ \fm \right)
         & =\delta^{n}_{(\bar R/ \bar{x_s}\bar R) }\left(  R/ \fm \right)
          \\&=\delta^{n}_{(\bar R/ \bar{x_s}\bar R) }\left(  \bar R/ \bar {\fm} \right)\\&
          =\delta^{n}_{(\bar R/ \bar{x_s}\bar R) }\left(  (\bar R/\bar{x_s}\bar R)/ (\bar \fm/\bar{x_s}\bar R) \right)\\&
          =0.    
        \end{array}\]

       (b)$\Rightarrow$(c).  Apply Theorem \ref{sf}.
       
        (c)$\Rightarrow$(a).   We assume that $ R $ is a regular ring. Our assumption, Observation \ref{n} and  Auslander-Buchsbaum formula implies that $ d-\depth_R(R/I)=\pd_R(R/I)\leq d-\depth( G ) $ which contradicts  that $ \depth(G)>\depth_R(R/I) $.
       \end{proof}
       	\section{characterization by linkage theory}
       	The notion of linkage of ideals in commutative algebra is invented by Peskine and Szpiro \cite{PS}.  Two ideals $I$ and $J$ in a Cohen-Macaulay local ring $R$ are said to be linked if there is a
       	regular sequence $\underline{a}$ in their intersection such that $I = (\underline{a}) :_R J$ and $J = (\underline{a}) :_R I$. They have shown that the Cohen-Macaulay-ness property
       	is preserved under linkage over Gorenstein local rings and provided a counterexample to show that
       	the above result is no longer true if the base ring is Cohen-Macaulay but not Gorenstein. In the following, we investigate the situation over a Cohen-Macaulay local ring with canonical module and generalize the result of Peskine and Szpiro \cite{PS}.
       
       \begin{thm} \label{H}
      Let $(R, \fm)$ be a Cohen-Macaulay local ring of dimension $ d $ with canonical module $\omega_R$. Suppose that  $ I $ and   $ J $ are two ideals of $ R $ such that $ 0:_{\omega_R}I=J \omega_R$, $ 0:_{\omega_R}J=I \omega_R$,  $ \emph\gd_{R/I}(\omega_{R}/I\omega_R)<\infty $ and also $\emph\gd_{R/J}(\omega_{R}/J\omega_R)<\infty $ (e.g. $R$ is Gorenstein), then $R/I $ is Cohen-Macaulay $ R $-module  if and only if $R/J $ is  Cohen-Macaulay $ R $-module.
       \end{thm}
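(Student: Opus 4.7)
My strategy would be to show that $R/I$ Cohen-Macaulay implies $R/J$ Cohen-Macaulay (the converse being symmetric), first by proving that $\omega_R/J\omega_R$ is a maximal Cohen-Macaulay $R$-module and then transferring this conclusion to $R/J$ via the Auslander-Bridger formula. First I would dispose of the degenerate case: since $\omega_R$ is a faithful $R$-module, $J\omega_R=0$ forces $J=0$, and then $I\omega_R=0:_{\omega_R}0=\omega_R$ combined with Nakayama forces $I=R$, which is trivial. So assume $I,J$ are both non-zero. The condition $J\omega_R=0:_{\omega_R}I\neq 0$ then forces $I\subseteq\fp$ for some $\fp\in\Ass\omega_R=\Assh R=\Min R$ (using $R$ Cohen-Macaulay), whence $\h I=0$ and $\dim R/I=d$ by equidimensionality; symmetrically $\dim R/J=d$.

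Now assume $R/I$ is Cohen-Macaulay. Since $R/I$ has codimension $0$ in $R$, the complex $\uhom_R(R/I,\omega_R)$ is quasi-isomorphic to its zeroth cohomology $\omega_{R/I}\cong\Hom_R(R/I,\omega_R)=J\omega_R$. The standard Hom-tensor adjunction then yields, for every $R/I$-module $M$, a natural isomorphism $\Ext^i_R(M,\omega_R)\cong\Ext^i_{R/I}(M,\omega_{R/I})$. Applied to $M=J\omega_R=\omega_{R/I}$, together with the familiar identities $\Hom_{R/I}(\omega_{R/I},\omega_{R/I})=R/I$ and $\Ext^i_{R/I}(\omega_{R/I},\omega_{R/I})=0$ for $i>0$, this gives $\Hom_R(J\omega_R,\omega_R)=R/I$ and $\Ext^i_R(J\omega_R,\omega_R)=0$ for all $i>0$. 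Feeding this into the long exact sequence of $\Ext_R^\ast(-,\omega_R)$ attached to $0\to J\omega_R\to\omega_R\to\omega_R/J\omega_R\to 0$, and using the vanishing of $\Ext^i_R(\omega_R,\omega_R)$ for $i>0$, the sequence collapses to $0\to\Hom_R(\omega_R/J\omega_R,\omega_R)\to R\twoheadrightarrow R/I\to 0$ with $\Ext^i_R(\omega_R/J\omega_R,\omega_R)=0$ for every $i>0$. Hence $\omega_R/J\omega_R$ is a maximal Cohen-Macaulay $R$-module, so $\depth_R(\omega_R/J\omega_R)=d$; since $J$ annihilates it, $\depth_{R/J}(\omega_R/J\omega_R)=d$ as well. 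Now the Auslander-Bridger formula applied to $\omega_R/J\omega_R$ over $R/J$---the one place the hypothesis $\gd_{R/J}(\omega_R/J\omega_R)<\infty$ is used---gives $\depth(R/J)=\gd_{R/J}(\omega_R/J\omega_R)+d\geq d$, which combined with $\depth(R/J)\leq\dim(R/J)=d$ forces $R/J$ Cohen-Macaulay.

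The main technical ingredient will be the change-of-rings identity $\Ext^i_R(M,\omega_R)\cong\Ext^i_{R/I}(M,\omega_{R/I})$ for $R/I$-modules $M$: it hinges on $\uhom_R(R/I,\omega_R)$ being a genuine module rather than a proper complex, which is guaranteed precisely by $R/I$ being Cohen-Macaulay of codimension zero. The G-dimension hypothesis itself plays no role in any of the Ext computations---it is only invoked at the very end, where Auslander-Bridger converts the maximal Cohen-Macaulay property of $\omega_R/J\omega_R$ over $R$ into the sought depth equality for $R/J$.
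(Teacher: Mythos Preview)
Your proof is correct and follows the same three-step outline as the paper: reduce to $\h I=\h J=0$, show that $\omega_R/J\omega_R$ is maximal Cohen--Macaulay over $R$, and then invoke the Auslander--Bridger formula over $R/J$. The only genuine difference is in the middle step. The paper dualizes a minimal free resolution of $R/I$ into $\omega_R$, obtaining an exact complex $0\to\omega_R/J\omega_R\to\oplus\omega_R\to\oplus\omega_R\to\cdots$, and then reads off $\depth_R(\omega_R/J\omega_R)=d$ by depth-chasing along the short exact pieces. You instead use the derived change-of-rings isomorphism $\uhom_R(M,\omega_R)\simeq\uhom_{R/I}(M,\omega_{R/I})$ (valid because $\Ext^{>0}_R(R/I,\omega_R)=0$) together with the standard identities for $\Ext^*_{R/I}(\omega_{R/I},\omega_{R/I})$ to kill $\Ext^{>0}_R(\omega_R/J\omega_R,\omega_R)$ directly. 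Your route is a bit more conceptual and avoids the explicit resolution; the paper's route is more hands-on but entirely elementary. One small point worth making explicit in your write-up: the surjectivity of the connecting map $R=\Hom_R(\omega_R,\omega_R)\to\Hom_R(J\omega_R,\omega_R)\cong R/I$ (needed to kill $\Ext^1$) follows because $\Ann_R(J\omega_R)=\Ann_R(\omega_{R/I})=I$, so the kernel is exactly $I$.
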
 
       \begin{proof}
        Assume that  $R/I $ is Cohen-Macaulay. Set $ t:=\gr(I, R)$ so that $t=\h_R(I)=\dim R- \dim R/I $.  If $ t>0  $ then there exists an $R$-regular element $x$ in $ I $. As $ \omega_R $  is maximal Cohen-Macaulay, $ x $ is also $ \omega_R $-regular which implies that $J\omega_R=(0:_{\omega_R}I)=0$. Hence $J=0$ which is absurd. So assume that    $ t=0 $ which implies that $R/I $ is maximal Cohen-Macaulay $R$--module so that $\Ext^{i}_R(R/I, \omega_R)=0  $ for all $ i\geq 1 $. Apply the functor  $ \Hom_R(-, \omega_R) $ on a minimal free resolution  $$ \cdots \longrightarrow \overset{t_2}{\oplus}R\longrightarrow \overset{t_1}{\oplus}R\longrightarrow R\longrightarrow R/I\longrightarrow0 $$ of  $R/I $, to obtain the induced exact sequence 
              	
              	\begin{center}
              		$0\longrightarrow\omega_{R}/J\omega_R\longrightarrow\overset{t_1}{\oplus}\omega_R\longrightarrow \overset{t_2}{\oplus}\omega_R\longrightarrow\cdots.$
              	\end{center}
       Splitting into the short exact sequences 
              	\begin{center}
              		
              		$\begin{array}{ccccccccc}
              		0 & \longrightarrow & \omega_{R}/J\omega_R &  \longrightarrow& \overset{t_1}{\oplus }\omega_R & \longrightarrow & C_1 &\longrightarrow&0  \\ 
              		0& \longrightarrow &C_1  &\longrightarrow  & \overset{t_2}{\oplus }\omega_R & \longrightarrow & C_2 &\longrightarrow  &0  \\ 
              		0 & \longrightarrow & C_2 &\longrightarrow  &\overset{t_3}{\oplus }\omega_R  &\longrightarrow  &C_3  &\longrightarrow  &0 \\ 
              		&  &  & &\vdots, &  &  &  & 
              		\end{array} $
              	\end{center}
              	where $ C_{i}=\im{f_{i+1}}$ for $ i\geq1 $,  we obtain $\depth_R(\omega_R/J\omega_R)=d$. Note that $\gd_{R/J}(\omega_{R}/J\omega_R)<\infty $, implies that $d= \depth_{R/J}(\omega_{R}/J\omega_R)\leq\depth_{R/J}(R/J)$.  Thus  
              	$R/J $ is also a maximal Cohen-Macaulay $R$-module.
       	
       \end{proof}

      To see some applications of Theorem \ref{H}, we refer to the  $ n$th $ \delta $-invariant of an $R$--module $M$ as in the paragraph just after \cite[Proposition 5.3]{AB}.
       \begin{cor}\label{Z}
       	Let $(R, \fm)$ be a Cohen-Macaulay local ring of dimension $ d $ with canonical module   $\omega_R$. Let $ I $ and $ J $ be ideals of $R$.
       	\begin{itemize}
       		\item [(a)] If $ 0:_{\omega_R}I=J \omega_R$ and $R/I $ is a  maximal Cohen-Macaulay $R$-module, then $\delta_{R}^{i}(J\omega_R)=0$ for all $ i\geq1 $.
       		\item[(b)]  If $ 0:_{\omega_R}I=J \omega_R$,  $ 0:_{\omega_R}J=I \omega_R$, $R/I $ is a maximal Cohen-Macaulay $R$-module, and $\emph\gd_{R/J}(\omega_{R}/J\omega_R)<\infty $,  then $\delta_{R}^{i}(I\omega_R)=0$ for all $ i\geq1 $.
       	\end{itemize} 
       \end{cor}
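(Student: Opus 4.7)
The plan is to prove part (a) directly by exploiting the identification $J\omega_R=\Hom_R(R/I,\omega_R)$ forced by the hypothesis $0:_{\omega_R}I=J\omega_R$, and then to obtain part (b) from part (a) by invoking Theorem \ref{H} and swapping the roles of $I$ and $J$.

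For part (a), since $R/I$ is maximal Cohen-Macaulay, $\Ext^{i}_R(R/I,\omega_R)=0$ for all $i\geq 1$. Dualizing a minimal free resolution of $R/I$ by $\Hom_R(-,\omega_R)$ therefore yields the exact sequence
$$0\longrightarrow J\omega_R\longrightarrow \omega_R\longrightarrow \omega_R^{t_1}\longrightarrow \omega_R^{t_2}\longrightarrow\cdots.$$
Splitting this into short exact sequences exactly as in the proof of Theorem \ref{H} and applying the depth lemma iteratively $d$ times forces $\depth_R(\omega_R/J\omega_R)=d$, so $\omega_R/J\omega_R$ is MCM. One further application of the depth lemma to $0\to J\omega_R\to\omega_R\to \omega_R/J\omega_R\to 0$ then shows that $J\omega_R$ itself is MCM.

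With $J\omega_R$ MCM, every syzygy $\Omega_R^{i}(J\omega_R)$ in its minimal free resolution with $i\geq 1$ is again MCM (iterated depth lemma) and has no free direct summand, by the standard fact that a free summand of a syzygy in a minimal resolution would force the relation $1\in \fm$ after splitting. A maximal Cohen-Macaulay module with no free direct summand has trivial minimal Cohen-Macaulay approximation, so its $\delta$-invariant is zero. Hence $\delta_R^{i}(J\omega_R)=\delta_R(\Omega_R^{i}(J\omega_R))=0$ for every $i\geq 1$, giving (a).

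For part (b), the assumptions include exactly the finiteness condition $\gd_{R/J}(\omega_R/J\omega_R)<\infty$ required by the direction ``$R/I$ MCM implies $R/J$ MCM'' in Theorem \ref{H}, so we conclude $R/J$ is MCM. Since the linkage condition $0:_{\omega_R}J=I\omega_R$ is also assumed, part (a) applied with $I$ and $J$ interchanged then gives $\delta_R^{i}(I\omega_R)=0$ for $i\geq 1$. The main, and essentially only, subtlety is to verify that the portion of the proof of Theorem \ref{H} that delivers $\omega_R/J\omega_R$ MCM depends solely on the exactness of the dualized sequence, not on the $\gd$ hypothesis, so it can be reused inside the proof of part (a).
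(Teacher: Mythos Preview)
Your proof is correct and follows essentially the same route as the paper: both establish that $\omega_R/J\omega_R$ is maximal Cohen--Macaulay by dualizing a minimal free resolution of $R/I$ and iterating the depth lemma (exactly the computation inside Theorem~\ref{H}), then deduce $\delta_R^i(J\omega_R)=0$ for $i\ge 1$; and both obtain (b) from (a) by invoking Theorem~\ref{H} to get $R/J$ maximal Cohen--Macaulay and then swapping the roles of $I$ and $J$. The only difference is cosmetic: where you argue directly that the syzygies $\Omega_R^i(J\omega_R)$ are maximal Cohen--Macaulay with no free summand, the paper simply cites the paragraph after \cite[Proposition~5.3]{AB} for the vanishing of the higher $\delta$-invariants of a maximal Cohen--Macaulay module. (One small caution: the ``standard fact'' you invoke about free summands of syzygies is not unconditional---it uses that the previous syzygy is maximal Cohen--Macaulay, via $\omega_R$-duality---but you have that hypothesis in hand.)
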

       \begin{proof}
       	(a).  A similar argument as in the proof of Theorem \ref{H}, implies that  $ \depth_R(\omega_{R}/J\omega_R)=d $ and $ \omega_{R}/J\omega_R$ is maximal Cohen-Macaulay.  By  the paragraph just after  \cite[Proposition 5.3]{AB}, we get $\delta_{R}^{i}(J\omega_R)=0$ for all $ i\geq1 $.
       	
       	(b). By Theorem \ref{H}, $R/J $ is maximal Cohen-Macaulay $R$-module so, by  part (a), $\delta_{R}^{i}(I\omega_R)=0$ for all $ i\geq1 $.
       \end{proof}
       	\begin{dfn}
       		\emph{Let $(R, \fm)$ be a local ring. An ideal of $R$ is said to be a linked ideal if $I=0:_R(0:_RI)$.  We call an ideal $I$ to be {\it generically linked} if for any $\fp\in\Spec R$ with $\h\fp=0$, the ideal $IR_\fp$ is linked in $R_\fp$.}
       	\end{dfn}
       	Note that a local ring $R$ is generically Gorenstein if and only if any ideal of height $0$ is generically linked.
       	
       In the following, we recall a characterization for a Cohen-Macaulay ring to be Gorenstein in terms of linkage of particular ideals. The proof is straightforward and we bring it here for convenience of the readers.  
       		\begin{prop}\label{gor}
       			Let $(R, \fm)$ be a Cohen-Macaulay local ring. The following statements are equivalent.
       			\begin{itemize}
       				\item [(i)] $R$ is Gorenstein.
       				\item[(ii)] For any non-zero proper ideal $I$ of $R$ and each $R$-regular sequence ${\bf x}=x_1,\cdots, x_t$ in $I$ with $t=\emph{\h} I$, the ideal $I/({\bf x})$ is generically linked in $R/({\bf x})$.
       			\end{itemize}
       	\end{prop}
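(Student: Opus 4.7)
The plan is to reduce both implications to the zero-dimensional quotient $R/({\bf x})$ and then invoke the standard characterization of an Artinian local Gorenstein ring via the double-annihilator identity on ideals.

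For (i)$\Rightarrow$(ii), assume $R$ is Gorenstein and fix $I$, ${\bf x}$ as in (ii). Since Gorenstein-ness is preserved under quotients by regular sequences, $S := R/({\bf x})$ is a Gorenstein local ring. For any height-zero prime $\fp$ of $S$, the localization $S_\fp$ is a zero-dimensional local Gorenstein ring. In such a ring, Matlis duality $\Hom_{S_\fp}(-,S_\fp)$ yields an order-reversing involution $\fa \mapsto 0:_{S_\fp}\fa$ on the lattice of ideals; in particular every ideal satisfies the double-annihilator identity $\fa = 0:_{S_\fp}(0:_{S_\fp}\fa)$ and is therefore linked. Applied to the image of $I/({\bf x})$ in $S_\fp$, this is exactly the required generic linkage.

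For (ii)$\Rightarrow$(i), choose a maximal $R$-regular sequence ${\bf x} = x_1,\ldots,x_d$ in $\fm$, where $d = \dim R$ (possible since $R$ is Cohen-Macaulay; the case $d=0$ is handled by taking ${\bf x}$ to be empty and arguing directly in $R$). Set $S := R/({\bf x})$ and $\fn := \fm/({\bf x})$. Then $S$ is an Artinian local ring with $\Soc(S) \neq 0$. For any non-zero $s \in \Soc(S)$ with a lift $\tilde s \in \fm$, consider $J := (\tilde s, x_1, \ldots, x_d)$. Then $J$ is non-zero and proper, ${\bf x}$ is an $R$-regular sequence contained in $J$, and $\h J = d$ because ${\bf x}R \subseteq J \subseteq \fm$ and $R$ is Cohen-Macaulay; hence hypothesis (ii) applies to the pair $(J,{\bf x})$. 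Since $S$ has the unique prime $\fn$ of height zero, the generic linkage of $J/({\bf x}) = (s)$ collapses to the plain identity $(s) = 0:_S(0:_S(s))$. As $s$ is a non-zero socle element, $0:_S(s) = \fn$, and $0:_S\fn = \Soc(S)$, so $(s) = \Soc(S)$. Running this over every non-zero $s \in \Soc(S)$ forces $\Soc(S)$ to be principal, i.e., $S$ is Gorenstein; because ${\bf x}$ is an $R$-regular sequence this lifts to $R$ being Gorenstein.

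The content of the proof lies almost entirely in choosing the right test ideals in the reverse direction: once one sees that the ideals of the form $(\tilde s)+({\bf x})$ for $s \in \Soc(R/({\bf x}))\setminus\{0\}$ are the ones to feed into (ii), the rest is a one-line annihilator computation combined with the standard fact that an Artinian local ring is Gorenstein iff its socle is principal. The only bookkeeping step to be careful about is confirming $\h J = d$, which uses both ${\bf x}R \subseteq J \subseteq \fm$ and the Cohen-Macaulay hypothesis so that $\h({\bf x}R)=d$.
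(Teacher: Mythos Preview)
Your proof is correct, and the forward direction is essentially identical to the paper's. For (ii)$\Rightarrow$(i), however, you take a genuinely different route from the paper.

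The paper argues by induction on $d=\dim R$: the base case $d=0$ uses the full double-annihilator hypothesis on \emph{all} ideals to conclude Gorenstein-ness; for $d>0$ one picks a single regular element $x$, checks that $R/xR$ again satisfies (ii), and invokes the inductive hypothesis. Your argument instead jumps straight to a full system of parameters ${\bf x}$, lands in the Artinian quotient $S=R/({\bf x})$, and then feeds in only the very specific test ideals $(\tilde s)+({\bf x})$ for $s\in\Soc(S)\setminus\{0\}$. The annihilator computation $(s)=0:_S(0:_S(s))=0:_S\fn=\Soc(S)$ then forces the socle to be simple.

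What each approach buys: the paper's induction is cleaner bookkeeping and makes transparent that the statement is stable under killing a regular element, but it leans on the (standard but not entirely trivial) fact that a zero-dimensional local ring in which \emph{every} ideal satisfies the double-annihilator identity is Gorenstein. Your argument is more economical in that it isolates exactly which ideals are needed---only the principal ideals generated by socle elements---and the Gorenstein conclusion then follows from the most elementary criterion (one-dimensional socle). The one place to be slightly careful in your write-up is the $d=0$ case: applying (ii) with the empty regular sequence is fine, but it is worth saying explicitly that $J=(\tilde s)$ is proper (because $s\in\fm$) and of height zero, so the hypothesis is indeed available.
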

       	\begin{proof} 
       			(i)$\Rightarrow$(ii). Let that $I$ is an ideal of $R$, $\h I=t$. Let ${\bf x}=x_1,\cdots, x_t\in I$ be an $R$-sequence. Then $\h (I/({\bf x}))=0$ in $R/({\bf x})$. As $R/({\bf x})$ is Gorenstein, $I/({\bf x})$ is generically linked in $R/({\bf x})$.

       	 (ii)$\Rightarrow$(i). We prove by induction on $d:=\dim R$. In the case $d=0$, we have $I=0:_R(0:_RI)$ for all ideals $I$ by our assumption. Therefore $R$ is Gorenstein. Let $d>0$ and the result has been settled for rings of dimensions smaller than $d$. Choose an $R$--regular element $x$ and set $\overline{R}=R/Rx$. We note that $\overline{R}$ is a Cohen-Macaulay local ring of dimension $d-1$. Let  $\overline{I}:=I/Rx$ be an ideal of $\overline{R}$ with height $t$  so that there is an $\overline{R}$--regular sequence $\overline{y}_1, \cdots, \overline{y}_t$, with $y_1, \ldots, y_t\in I$. Note that ${\bf x}:= x, y_1, \cdots, y_{t}$ is an $R$--sequence contained in $I$ and $\h I=t+1$. By our assumption, $I/(x, y_1, \cdots, y_t)$ is generically linked in $R/(x, y_1, \cdots, y_{t})$. Equivalently, the ideal $\frac{I/Rx}{(\overline{y_1}, \cdots, \overline{y_t})}$ is generically linked in $R/Rx$. By induction hypothesis, $R/Rx$ is Gorenstein and so is $R$.
       	\end{proof}

         	We end the paper by the following remark which shows that generically Gorenstein local rings possessing canonical modules with dimensions greater than $1$ are reducible to the one with lower dimension by some non-zero divisor.
              	\begin{rmk}\label{rg}
              		Let $(R, \fm)$ be a Cohen-Macaulay local ring with canonical module $\omega_{R}$ and $\emph{\dim} R>1$. Then the following are equivalent. 
              			\begin{itemize}
              				\item [(i)] $R$ is generically Gorenstein.
              				\item [(ii)] There exists an $R$--regular element $x$ such that $R/Rx$ is generically Gorenstein.
              		\end{itemize}
              	\end{rmk}  
              	\begin{proof}
              			(i)$\Rightarrow$(ii). We may assume that $R$ is not Gorenstein. As $R$ is generically Gorenstein, by \cite[Proposition 3.3.18]{BH}, $\omega_{R}$ is an ideal of $R$. As $\fm\not\subseteq(\underset{\fp\in\Ass R}{\cup }\fp)\cup(\underset{\fq\in\Ass_R( R/\omega_R)}{\cup}\fq)$, there is $x\in \fm$ which is regular on $R$, $R/\omega_R$. We define  $R$-homomorphism $\alpha :\omega_R/x\omega_R\longrightarrow R/xR$ such that $ \alpha(u+x\omega_R)=u+xR $. As $\alpha$ is injective, $\omega_{R}/x\omega_{R}$ can be embedded into $R/Rx$. As $\omega_{R}/x\omega_{R}\cong\omega_{R/Rx}$ is the canonical module of $R/Rx$, the result follows.  
              		
              (ii)$\Rightarrow$(i). Assume that $R/Rx$ is generically Gorenstein for some $R$-regular element $x$. Let $\fp$ be a minimal prime ideal of $R$. Thus one has $\dim R/(Rx+\fp)=\dim R-1$ and $\h(Rx+\fp)=1$. Hence $Rx+\fp\subseteq\fq$ for some prime ideal $\fq$ with $\h\fq=1$. By our assumption, $(R/Rx)_{\fq/Rx}$  and so $R_\fq/xR_\fq$ is Gorenstein. Hence $R_\fq$ and therefore $R_\fp$ is Gorenstein. 
              
              	\end{proof}
              \begin{eg}
              	\emph{Assume that $R$ is a non-Gorenstein Cohen-Macaulay local ring with canonical module $\omega_R$ of dimension $1$ (e.g.   $R= k \llbracket t^3, t^5, t^7 \rrbracket       $, $k$ is a field). Then, for any $R$-regular element $x$, $R/Rx$ is not generically Gorenstein.}
              \end{eg}

\bibliographystyle{amsplain}

\end{document}